\DeclareMathAlphabet{\mathpzc}{OT1}{pzc}{m}{it}
\newtheorem{theorem}{Theorem}[section]
\newtheorem*{theorem1}{Theorem 1}
\newtheorem*{theorem2}{Theorem 2}
\newtheorem{proposition}[theorem]{Proposition}
\newtheorem{lemma}[theorem]{Lemma}
\theoremstyle{definition}
\newtheorem{definition}[theorem]{Definition}
\theoremstyle{remark}
\newtheorem{remark}[theorem]{Remark}
\newtheorem{remarks}[theorem]{Remarks}
\newcommand{\CC}{{\mathcal C}}
\newcommand{\cD}{{\mathcal D}}
\newcommand{\fh}{{{\mathfrak h}}}
\newcommand{\fg}{{{\mathfrak g}}} 
\newcommand{\fb}{{{\mathfrak b}}}
\newcommand{\fn}{{{\mathfrak n}}}
\newcommand{\fa}{{{\mathfrak a}}}
\newcommand{\fsl}{{{\mathfrak{sl}}}}
\newcommand{\fhd}{\fh^\star}
\newcommand{\DZ}{{\mathbb Z}}
\newcommand{\DN}{{\mathbb N}}
\newcommand{\DQ}{{\mathbb Q}}
\newcommand{\mbe}{{E}}
\newcommand{\mbf}{{F}}
\newcommand{\ch}{{\operatorname{char}\, }}
\newcommand{\Ext}{{\operatorname{Ext}}}
\newcommand{\Hom}{{\operatorname{Hom}}}
\newcommand{\Lie}{{\operatorname{Lie}}}
\newcommand{\supp}{{\operatorname{supp}}\,}
\newcommand{\rad}{{\operatorname{rad}}}
\newcommand{\Dist}{{\operatorname{Dist}}}
\newcommand{\height}{{\operatorname{ht}}}
\newcommand{\ul}{\underline}
\newcommand{\id}{{\operatorname{id}}}
\newcommand{\ad}{{\operatorname{ad}}}
\newcommand{\p}{{\operatorname{p}}}
\newcommand{\comment}[1]{}
\newcommand{\lgl}{\langle}
\newcommand{\rgl}{\rangle}
\newcommand{\LDelta}{{V}}
\newcommand{\WDelta}{{\Delta}}
\begin{document}

\pagenumbering{arabic}
\title[]{Lefschetz operators, Hodge-Riemann forms, and representations} \author[]{Peter Fiebig}
\begin{abstract} 
For a field of characteristic $\ne 2$  we study vector spaces that are graded by the weight lattice of a root system, and are endowed with linear operators in each simple root direction. We show that these data extend to a weight lattice graded semisimple representation of the corresponding Lie algebra, if and only if there exists a bilinear form that satisfies properties (roughly) analogous to those of the Hodge-Riemann forms in complex geometry. In the second part of the article we replace the field by the  $p$-adic integers (with $p\ne 2$) and show that in this case the existence of a certain bilinear form is equivalent to the existence of a  structure of a tilting module for   the associated simply connected $p$-adic Chevalley  group. 
\end{abstract}

\address{Department Mathematik, FAU Erlangen--N\"urnberg, Cauerstra\ss e 11, 91058 Erlangen}
\email{fiebig@math.fau.de}
\maketitle

\section{Introduction}

Let $R$ be a root system, $\Pi\subset R$ a basis and $X$ the weight lattice. Let $K$ be a field of characteristic $\ne 2$, and let $(\fg,\fh)$ be the split semi-simple Lie algebra over $K$ with root system $R$.  Suppose that $M=\bigoplus_{\mu\in X} M_\mu$ is an $X$-graded $K$-vector space. Suppose that for each simple root $\alpha\in\Pi$ we have an operator $F_\alpha\colon M\to M$ that is homogeneous of degree $-\alpha$. One motivation for the present article is to find an answer to the following question: Under what additional conditions is there a $\fg$-module structure on $M$ such  that for each $\alpha$ the homomorphism $F_\alpha$ is the action map of the corresponding  Chevalley generator in $\fg$, and $\fh$ acts on $M_\mu$ via the character associated with  $\mu$? 

In the case that   $R$ is the root system of type $A_1$, $K$ is a field of characteristic $0$, and $M$ is finite dimensional, an answer to the above question is well-known. In that case we can  identify  $X$ with $\DZ$ in such a way that the positive root corresponds to $2$. Let $M=\bigoplus_{n\in\DZ} M_n$ be a finite dimensional, $\DZ$-graded vector space, and $F\colon M\to M$ a homogeneous $K$-linear map of degree $-2$. Recall that $F$ is called a {\em Lefschetz operator} if for any $l\ge 0$ the homorphism $F^l\colon M_l\to M_{-l}$ is an isomorphism. Denote by $e,h,f\in\fsl_2(K)$ the standard generators. Then the following are equivalent.

\begin{enumerate}
\item There is an $\fsl_2(K)$-module structure on $M$ such that $M_n$ is the eigenspace for the action of $h$ with eigenvalue $n$ and the action map of $f$ is $F$.
\item $F$ is a  Lefschetz operator.
\end{enumerate}
Now we give another equivalent condition. For any $m\in\DZ$ let $M_{\{\ge m\}}\subset M$ be the smallest $F$-stable subspace containing $M_n$ for all $n\ge m$. 

\begin{enumerate}\setcounter{enumi}{2}
\item There exists a symmetric non-degenerate bilinear form $(\cdot,\cdot)$ on $M$ with the following properties.
\begin{enumerate}
\item $M_n$ and $M_m$ are orthogonal if $m\ne n$.
\item The restriction of $(\cdot,\cdot)$ to $M_{\{\ge m\}}\times M_{\{\ge m\}}$ is non-degenerate for all $m\in\DZ$. 
\item  If we denote by $E\colon M\to M$ the adjoint of $F$ with respect to $(\cdot,\cdot)$, then $[E,F]|_{M_n}=n\cdot \id_{M_n}$ for all $n\in\DZ$. 
\end{enumerate}
\end{enumerate}
The equivalence of (1), (2) and (3) are the cornerstones of the most common proof of the Hard Lefschetz theorem for complex K\"ahler manifolds. In this case, the bilinear form $(\cdot,\cdot)$ is the {\em Hodge-Riemann form}.  in Section \ref{sec-mot} we provide a proof of the equivalence of (1), (2) and (3) above, as a motivation for the following.

Admittedly, condition (3) is, compared to (1), rather involved. But it has the advantage that it easily generalizes to other root systems.  So let $M=\bigoplus_{\mu\in X} M_\mu$ and $\{F_\alpha\}_{\alpha\in\Pi}$ be as in the first paragraph of this introduction.  We call a subset $I$ of $X$ {\em closed} if it contains with any element $\mu$ all elements that are bigger than $\mu$ with respect to the usual order on $X$. Then we let $M_I$ be the smallest subspace of $M$ that contains $M_\mu$ for all $\mu\in I$ and is stable under all maps $F_\alpha$.  We call a bilinear form $(\cdot,\cdot)$  an {\em HR-form} if the following are satisfied.
\begin{enumerate}
\item $(\cdot,\cdot)$ is non-degenerate and symmetric. 
\item $M_\mu$ and $M_\nu$ are orthogonal if $\mu\ne \nu$.
\item For any closed subset $I$ of $X$,  the restriction of $(\cdot,\cdot)$ to $M_I\times M_I$ is non-degenerate. 
\item  Denote by $E_\alpha\colon M\to M$ the adjoint of $F_\alpha$ with respect to $(\cdot,\cdot)$. Then $[E_\alpha,F_\alpha]|_{M_\mu}=\langle\mu,\alpha^\vee\rangle\cdot \id_{M_\mu}$  for all $\alpha\in\Pi$ and $\mu\in X$ and $[E_\alpha,F_\beta]=0$ if $\alpha\ne\beta$.
\end{enumerate}
The first main result of this article is the following.

\begin{theorem1} Let $K$ be a field of characteristic $\ne 2$. The following are equivalent.
\begin{enumerate}
\item There exists an HR-form on $M$.
\item There exists a structure of a semisimple $X$-graded representation\footnote{See Section \ref{subsec-Xgradrep} for the definition of an $X$-graded representation.} of $\fg$ on $M$  such that  for all $\alpha\in \Pi$, the homomorphism $F_\alpha$ is the action map of a Chevalley generator corresponding to $-\alpha$. 
\end{enumerate}
\end{theorem1}
It might be worth pointing out that in the situation of the theorem above, the required commutation relations between the $E_\alpha$'s and $F_\beta$'s are relations that are certainly satisfied for the standard generators of the Lie algebra,  but that the less obvious Serre relations in the Lie algebra
are a formal consequence of the existence of an HR-form. A similar result in the framework of quantum groups was obtained by Lusztig (cf. \cite[Chapter 1.4]{LQG}).

In the second part of this article we generalize this further. For a prime number $p$ denote by $\DZ_p$ the ring of $p$-adic integers. We now consider $X$-graded $\DZ_p$-modules $M=\bigoplus_{\mu\in X}M_\mu$ that are free of finite rank, together with $\DZ_p$-linear operators $F_\alpha\colon M\to M$ of degree $-\alpha$ for all $\alpha\in\Pi$.  We assume that for any $\alpha\in \Pi$ and  $n\ge0$ we have $F_\alpha^n(M)\subset n! M$. Hence $F_\alpha^{(n)}:=F_\alpha^n/n!$ is a well-defined operator on $M$. For a closed subset $I$ of $X$ we denote by $M_I$ the smallest $\DZ_p$-submodule that is stable under all maps $F_\alpha^{(n)}$ with $n\ge 0$ and $\alpha\in\Pi$ and contains all $M_\mu$ with $\mu\in I$.  We assume that, as a $\DZ_p$-module, $M_I$ is a direct summand in $M$ for all such $I$. 
In this situation  we call a $\DZ_p$-bilinear form  $(\cdot,\cdot)$ on $M$ a  {\em $p$-adic HR-form} if the following are satisfied.

\begin{enumerate}
\item It  is symmetric and non-degenerate.
\item For any closed subset $I$ of $X$, the restriction to $M_I\times M_I$ is faithful. 
\item $M_\mu$ and $M_\nu$ are orthogonal if $\mu\ne \nu$.
\item If we denote by $E_\alpha$ the adjoint of $F_\alpha$ with respect to $(\cdot,\cdot)$, then $[E_\alpha,F_\alpha]|_{M_\mu}=\langle\mu,\alpha^\vee\rangle\cdot \id_{M_\mu}$  for all $\alpha\in\Pi$ and $\mu\in X$ and $[E_\alpha,F_\beta]=0$ if $\alpha\ne\beta$.
\end{enumerate}

Denote by $G_{\DZ_p}$ the split semi-simple, simply connected  algebraic group associated with $R$ over the ring $\DZ_p$. Then the second main result of this article is the following.

\begin{theorem2} Suppose $p\ne 2$. Then the following are equivalent.
\begin{enumerate}
\item There exists a $p$-adic HR-form on $M$.
\item There exists a $G_{\DZ_p}$-module structure on $M$ such that $M=\bigoplus_{\mu\in X}M_\mu$ is the weight decomposition, $F_\alpha$ is the action map of a Chevalley generator in $\Lie(G_{\DZ_p})$ corresponding to $-\alpha$, and with this structure, $M$ is a tilting module.
\end{enumerate}
\end{theorem2}

In a forthcoming article the interested reader will find an algorithm that makes use of the above result. This algorithm takes a dominant weight $\lambda$ and constructs the $X$-graded space underlying the indecomposable tilting module $T_{\DZ_p}(\lambda)$ with highest weight $\lambda$ together with its structure as a module for the algebra of distributions of $G_{\DZ_p}$.  In particular, it yields its character. The algorithm is inductive on the weights, i.e. it constructs $T_{\DZ_p}(\lambda)_\mu$ starting from $\mu=\lambda$ by downwards induction.

\begin{remark}  The main results in this article illustrate the concept that  {\em the world is more semi-simple than it a priori has a right to be\footnote{ G. Williamson, ICM talk 2018}}, and that {\em hidden geometric structure in representation theory is often provided by non-degenerate bilinear forms}\footnote{ G. Williamson, ICM talk 2018}. I would like to thank Tom Braden for sharing his ideas on semi-infinite moment graphs. His construction of bilinear forms on semi-infinite Braden--MacPherson sheaves  was highly inspiring for the present article. 
\end{remark}

\section{Motivation: Lefschetz operators and representations of $\fsl_2$} \label{sec-mot}
As a motivating example we treat in detail the case of the root system $A_1$ and characteristic $0$ field coefficients in this section. So we fix a field $K$ of characteristic $0$, a $\DZ$-graded $K$-vector space $M=\bigoplus_{n\in\DZ} M_n$ of finite dimension, and a $K$-linear homogeneous map
$\mbf\colon M\to M$   of degree  $-2$. Recall that one says that $F$ is a {\em Lefschetz operator} if for any $l\ge 0$ the $l$-th power of $F$ induces an {\em isomorphism} $M_l\xrightarrow{\sim} M_{-l}$ \footnote{In the theory of Lefschetz operators it is much more customary to consider homogeneous operators of degree $+2$. However, since the applications for the theory developed here are mainly of representation theoretic nature, where one is used to the notion of  highest weight  rather than lowest weight modules, we multiply the indices with $-1$.}. For any $m\in\DZ$ we denote by $M_{\{\ge m\}}$ the smallest $F$-stable subspace of $M$ that contains all homogeneous components $M_n$ with $n\ge m$. This is a graded subspace.

Let $e,f,h\in\fsl_{2}(K)$ be the standard generators with relations $[e,f]=h$, $[h,e]=2e$ and $[h,f]=-2f$.

\begin{proposition}\label{prop-motivation} The following are equivalent:
\begin{enumerate}
\item $\mbf$ is a Lefschetz operator.
\item There exists an $\fsl_2(K)$-module structure on $M$ such that for all $n\in\DZ$, $M_n$ is the $h$-eigenspace with eigenvalue $n$, and $F$ is the action map of $f$.
\item  There exists a symmetric $K$-bilinear form $(\cdot,\cdot)$ on $M$ such that the following holds:
\begin{itemize} 
\item The restriction of $(\cdot,\cdot)$ to $M_{\{\ge m\}}\times M_{\{\ge m\}}$ is non-degenerate for all $m\in\DZ$.
\item $M_n$ and $M_m$ are orthogonal for $m\ne n$.
\item  For each $n\in\DZ$ and $v\in M_n$ we have $[\mbe,\mbf](v)=nv$, where $\mbe\colon M\to M$ is the adjoint to $\mbf$ with respect to $(\cdot,\cdot)$.
\end{itemize}
\end{enumerate}
\end{proposition}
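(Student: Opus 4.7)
The plan is to prove the equivalences by a cycle: (2) $\Rightarrow$ (1), (2) $\Rightarrow$ (3), (3) $\Rightarrow$ (2), and (1) $\Rightarrow$ (2). Two of these are essentially immediate. For (2) $\Rightarrow$ (1), I would use that a finite-dimensional $\fsl_2(K)$-module in characteristic zero decomposes as a direct sum of simples $L(l)$ for $l\ge 0$; on each $L(l)$ the operator $f^l$ sends the highest weight line isomorphically to the lowest, and assembling these across isotypic components gives the Lefschetz isomorphism $F^l\colon M_l\xrightarrow{\sim} M_{-l}$. For (3) $\Rightarrow$ (2), I would set $H$ to be the degree operator (acting as $n$ on $M_n$) and let $E$ be the adjoint of $F$. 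Since $F$ is homogeneous of degree $-2$ and distinct weight spaces are orthogonal, $E$ is forced to be homogeneous of degree $+2$, which immediately yields $[H,F]=-2F$ and $[H,E]=2E$; the remaining relation $[E,F]=H$ is exactly the commutator hypothesis in (3).

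For (2) $\Rightarrow$ (3) I would build a contravariant bilinear form irreducible by irreducible. On each simple $L(l)$, the adjointness condition $(fv,w)=(v,ew)$ together with the normalization $(v_l,v_l)=1$ on a highest weight vector uniquely determines a symmetric form; using $ef^k v_l = k(l-k+1)f^{k-1}v_l$ one sees that $(f^k v_l,f^k v_l) = k!\, l(l-1)\cdots(l-k+1) \ne 0$ for $0\le k\le l$ in characteristic zero, while vectors of different weights are orthogonal by adjointness. Transporting this form along a decomposition $M=\bigoplus_l L(l)\otimes V_l$ (after equipping each multiplicity space $V_l$ with a non-degenerate symmetric form) gives a form on $M$ with all the required properties. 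For the non-degeneracy of the restriction to $M_{\{\ge m\}}$, I would observe that $M_{\{\ge m\}}$ is precisely the sum of isotypic components $L(l)\otimes V_l$ with $l\ge m$: if $l\ge m$, then the weight-$l$ space $\{v_l\}\otimes V_l$ lies in $M_{\{\ge m\}}$ and generates all of $L(l)$ under $F$, while if $l<m$ no weight of $L(l)$ is at least $m$. The restriction is thus an orthogonal direct sum of non-degenerate pieces.

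The main obstacle is (1) $\Rightarrow$ (2), which amounts to the classical \emph{Lefschetz decomposition}. For $l\ge 0$ define the primitive subspace $P_l:=\ker(F^{l+1}\colon M_l\to M_{-l-2})$. Factoring the Lefschetz isomorphism $F^{l+2}\colon M_{l+2}\to M_{-l-2}$ as $F^{l+1}\circ F$ shows that $F^{l+1}$ restricts to an isomorphism on $F(M_{l+2})$, so $F(M_{l+2})\cap P_l=0$; conversely, given $w\in M_l$, surjectivity of $F^{l+2}$ produces $u\in M_{l+2}$ with $F^{l+2}u=F^{l+1}w$, which yields the decomposition $w=Fu+(w-Fu)$ with $w-Fu\in P_l$. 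Hence $M_l=P_l\oplus F(M_{l+2})$, and iterating gives the global primitive decomposition $M=\bigoplus_{l\ge 0}\bigoplus_{k=0}^{l}F^k P_l$ with $F^kP_l\subset M_{l-2k}$. On each string $\{F^k v\}_{k=0}^{l}$ emanating from $v\in P_l$, I would define $e(F^k v):=k(l-k+1)F^{k-1}v$, reproducing the standard $\fsl_2$-action on $L(l)$, and then verify the commutator relations by direct computation. The delicate point here is the inductive use of the Lefschetz property to guarantee that each $F^k|_{P_l}$ is injective (so the formulas for $e$ are well defined) and that the strings assemble into a true direct sum filling out all of $M$.
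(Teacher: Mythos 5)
Your proposal is correct and follows essentially the same route as the paper: the primitive decomposition $M_l=P_l\oplus F(M_{l+2})$ with strings $F^kP_l$ for (1)$\Rightarrow$(2), a contravariant form built simple-by-simple (with $M_{\{\ge m\}}$ identified as the sum of isotypic components $L(l)$ with $l\ge m$) for (2)$\Rightarrow$(3), and the degree operator together with the adjoint $E$ for (3)$\Rightarrow$(2). The only cosmetic difference is that you compute the form values $(F^kv_l,F^kv_l)=k!\,l(l-1)\cdots(l-k+1)$ explicitly where the paper invokes $dL(n)\cong L(n)$ and the one-dimensionality of the highest weight space.
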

\begin{remark}\label{rem-HR} The above statement is well-known. It appears in the most common proof of the Hard Lefschetz theorem for complex K\"ahler manifolds, which is due to Chern (cf. \cite{C}). In this setup,  $M$ is the graded cohomology of such a manifold, shifted in degree by the complex dimension, and $F$ is  given by  the action of the K\"ahler form. The Hard Lefschetz theorem states that $F$ is then a Lefschetz operator in the above sense, and its proof shows that property (3) is satisfied. The bilinear form appearing in (3) is the {\em Hodge-Riemann form}. 
\end{remark}
\begin{proof} Suppose that $\mbf$ is a Lefschetz operator on $M$. For all $m\ge 0$ let $M^{\p}_m\subset M_m$ be the kernel of $\mbf^{m+1}|_{M_m}\colon M_m\to M_{-m-2}$, and set $M^\p:=\bigoplus_{n\ge 0} M_n^\p$. An element in $M_n^\p$ is called   {\em primitive}.  We claim that for each $n\ge 0$ and  $v\in M_n$ there are unique elements $m_{n}\in M_{n}^\p$, $m_{n+2}\in M_{n+2}^\p$,\dots such that
$$
v=m_{n}+\mbf(m_{n+2})+\mbf^2(m_{n+4})+\dots.
$$
First we prove the existence of such a presentation.  Note that if $v\in M_n$ admits a representation as above, then so does $\mbf(v)\in M_{n-2}$.  Using induction on $n$ it is hence enough to show that $M_n=F(M_{n+2})+M_n^\p$.  So let $v\in M_n$ consider $\mbf^{n+1}(v)\in M_{-n-2}$. As $F$ is a Lefschetz operator there is a  $v^\prime\in M_{n+2}$ such that $\mbf^{n+2}(v^\prime)=\mbf^{n+1}(v)$. Then $m_n:=v-\mbf(v^\prime)$ is in the kernel of $\mbf^{n+1}$, hence primitive. 

Now we show that a presentation as above for $v$ is unique. It suffices to show this for $v=0$. So suppose there is a presentation $
0=m_{n}+\mbf(m_{n+2})+\mbf^2(m_{n+4})+\dots.
$ with the property that not all of the $m_{i}$'s are $=0$. Let $r\ge 0$ be maximal with $m_{n+2r}\ne 0$.Then $0=\mbf^{n+r}(0)=\mbf^{n+2r}(m_{n+2r})$, which contradicts the fact that  $\mbf^{n+2r}\colon M_{n+2r}\to M_{-n-2r}$ is an isomorphism. 

Similarly, for $n>0$ and $v\in M_{-n}$ there are unique elements $m_{n}\in M_n^\p$, $m_{n+2}\in M_{n+2}^\p$, \dots such that
$$
v=\mbf^n(m_n)+\mbf^{n+1}(m_{n+2})+\mbf^{n+2}(m_{n+4})+\dots.
$$
This result is obtained from the previous using the isomorphism $\mbf^n\colon M_n\to M_{-n}$. Hence we can write $M=M(1)\oplus \dots\oplus M(l)$, where each $M(s)$ is an $F$-stable, $\DZ$-graded linear subspace of $M$ with the property that there is some homogeneous element  $m_s\in M(s)\cap M_{n_s}^\p$ with $n_s\ge 0$ and such that $m_s,\mbf(m_s),\dots, \mbf^{n_s}(m_s)$ is a basis of $M(s)$. In particular, the dimension of $M(s)$ is $n_s+1$.

Now recall that a finite dimensional $\fsl_2(K)$-module with a diagonalizable action of $h$ splits into a direct sum of copies of irreducible highest weight modules. The highest weight module $L(n)$ (with $n\ge 0$) has a basis $v_1$, $v_2$, \dots, $v_{n+1}$ that satisfies the following: $f.v_j=v_{j+1}$ for $1\le j\le n$ and $f.v_{n+1}=0$, $h.v_j=(n-2j)v_i$ for $1\le j\le n+1$, and $e.v_j=j(n-j+1)v_{j-1}$ for $2\le j\le n+1$ and $e.v_1=0$ (cf. \cite[Section 7.2]{H}, where the basis is differently normalized). 
On each $M(s)$ we can now define the structure of an $\fsl_2(K)$-module as follows. Set $n=n_s+1$ and let $h$ act as multiplication with $i$ on each homogeneous element of degree $i$, $f$ by $\mbf$ and $e$ by setting $e.\mbf^j(m_s):=j(n-j+1)\mbf^{j-1}(m_s)$. This proves that (1) implies (2). 

Now assume (2). Recall that there is an involutive antiautomorphism $\tau$ on $\fsl_2(K)$ with $\tau(e)=f$ and $\tau(h)=h$. For a finite dimensional representation $M$ we denote by $dM$ the contravariant representation. As a vector space, $dM$ is the $K$-linear dual $M^\ast$ of $M$, and the action is given by $(x.f)(m)=f(\tau(x).m)$ for $x\in\fg$, $f\in dM$ and $m\in M$. We then have an $h$-weight decomposition $dM=\bigoplus_{i\in\DZ} (dM)_i$ with $(dM)_i=(M_i)^\ast$. It follows from the above discussion that $dL(n)\cong L(n)$. Any such isomorphism is given by a non-degenerate bilinear form $(\cdot,\cdot)$ on $L(\lambda)$ that has the property that $(x.m,n)=(m,\tau(x).n)$ for all $x\in\fg$ and $m,n\in L(\lambda)$. So  the $h$-eigenspace decomposition is orthogonal and the action map of $e$ is right and left adjoint to the action map of $f$. By the explicit description of a basis above, we deduce that this bilinear form is already determined by its value on a highest weight generator. In particular, it is symmetric. Moreover, for $m\in\DZ$, we have $L(n)_{\{\ge m\}}=L(n)$ if $m\le n$, and $L(n)_{\{\ge m\}}=\{0\}$ if $m>n$. So property (3) follows in the case that $M$ is isomorphic to some $L(n)$. As we noted above, any finite dimensional $h$-diagonalizable $\fsl_2(K)$-module $M$ is isomorphic to a direct sum of copies of various $L(n)$, so we can deduce (3) in the general case.

Property (3) obviously implies (2), and the explicit description of the structure of finite dimensional $\fsl_2(K)$-representations with a diagonalizable $h$-action above shows that (2) implies (1). 
\end{proof}
 The main objective of the present paper is to generalize the equivalence of (2) and (3) in the proposition above in various ways.

\section{Operators in root directions  on $X$-graded $K$-vector spaces}\label{sec-Xgradsp}

Let $R$ be a finite root system in the real vector space $V$, and let $\Pi$ be a basis of $R$. For $\alpha\in \Pi$ denote by $\alpha^\vee\in V^\ast$ its coroot, and let $X\subset V$ be the weight lattice. Let $\le$ be the partial order on $X$ given by $\lambda\le\mu$ if and only if $\mu-\lambda$ can be written as a sum of elements in $\Pi$. We call a subset $I$ of $X$ {\em closed}, if it is upwardly closed with respect to $\le$, i.e. if $\mu\in I$ and $\mu\le\nu$ implies $\nu\in I$. This clearly defines a topology on $X$ with the open sets being the downwardly closed sets.

\subsection{Graded  spaces with operators}
We fix a field $K$ of arbitrary characteristic. We will later assume that $\ch K\ne 2$.  Let $M=\bigoplus_{\mu\in X} M_\mu$ be an $X$-graded $K$-vector space. The {\em support of $M$} is $\supp M=\{\mu\in X\mid M_\mu\ne\{0\}\}$. The homogeneous elements in $M$ are called {\em weight vectors} and  $\mu$   is called the {\em weight} of $w$ if $w\in M_\mu$.  For any $\alpha\in\Pi$ we fix  a homogeneous $K$-linear map $F_\alpha\colon M\to M$  of degree $-\alpha$. 

We say that  a  subset $S$ of $M$ is {\em $F$-stable} if $F_\alpha(S)\subset S\cup\{0\}$ for all $\alpha\in\Pi$. For a  subset $T$ of $M$ we denote by $\lgl T\rgl_F$ the smallest $F$-stable linear subspace of $M$ that contains $T$.   
Note that if $T\subset M$ is a graded subset, i.e. if it contains with any element all of its homogeneous summands, then $\lgl T\rgl_F$ is a graded subspace.  
For a closed subset $I$ of $X$ we set $M_I:=\lgl \bigoplus_{\mu\in I} M_\mu\rgl_F\subset M$. We will always assume the following. 

\begin{enumerate}
\item[$(\ast)_K$] There is a finite subset $T$ of $M$  such that $M=\lgl T\rgl_F$. 
\end{enumerate}
The above implies that each graded component $M_\mu$ is finite dimensional, and that the support of $M$ in $X$ is bounded from above, i.e. there exist $\gamma_1$,\dots,$\gamma_l\in X$ such that $M_\mu\ne 0$ implies that $\mu\le \gamma_i$ for some $i\in\{1,\dots,l\}$. 

\begin{definition}  We say that $M$ is {\em $F$-cyclic with highest weight $\lambda\in X$} if there is an element $m\in M_\lambda$, $m\ne 0$, and $M=\lgl \{m\}\rgl_F$. 
\end{definition}
If $M$ is $F$-cyclic, then $M_\lambda$ is one-dimensional and $M_\mu\ne\{0\}$ implies $\mu\le\lambda$.

\subsection{HR-forms}

Let $(\cdot,\cdot)\colon M\times M\to K$ be a $K$-bilinear form on $M$.

\begin{definition}\label{def-formK} 
 We say that  $(\cdot,\cdot)$ is an {\em HR-form} on $M$ if the following holds:
\begin{enumerate}
\item $(\cdot,\cdot)$ is symmetric. 
\item For any closed subset $I$, the restriction of $(\cdot,\cdot)$ to $M_I\times M_I$ is non-degenerate.
\item $M_\mu$ and $M_\nu$ are orthogonal for $\mu\ne\nu$. 
\item For $\alpha,\beta\in\Pi$ we have
\begin{itemize}
\item  $[\mbe_\alpha,\mbf_\alpha](v)=\langle\mu,\alpha^\vee\rangle v$ for all $\mu\in X$ and  $v\in M_\mu$,
\item $[\mbe_\alpha,\mbf_\beta]=0$ if $\alpha\ne\beta$,
\end{itemize}
where $\mbe_\alpha\colon M\to M$ is the adjoint of $\mbf_\alpha$ with respect to $(\cdot,\cdot)$\footnote{Note that (2) for $I=X$ implies that $(\cdot,\cdot)\colon M\times M\to K$ is non-degenerate, and (3) implies that  $E_\alpha$ is homogeneous of degree $+\alpha$, hence $[E_\alpha,F_\alpha]$ preserves the grading.}.  
\end{enumerate}
\end{definition}

\begin{remark} Suppose that $K$ is a field of characteristic $0$, that $M$ is finite dimensional and that there exists an HR-form on $M$. Fix a simple root $\alpha$. Then $M$ splits into the direct sum of the $F_{\alpha}$-stable subspaces $M_{S}:=\bigoplus_{\gamma\in S} M_{\gamma}$, where $S$ runs through the set of {\em $\alpha$-strings} $X/\DZ \alpha$ in $X$. We consider each $M_{S}$ as $\DZ$-graded with $(M_S)_n:=M_\gamma$ if there is a (unique) $\gamma\in S$ with $\lgl\gamma,\alpha^\vee\rgl=n$, and $=0$ otherwise. Then Proposition \ref{prop-motivation} implies that $F_\alpha$ is a Lefschetz operator on each $M_S=\bigoplus_{n\in\DZ} (M_S)_n$. 
\end{remark}

\begin{remark}\label{rem-decomp1}  Let $(\cdot,\cdot)$ be an HR-form on $M$, and suppose that $M=M_1\oplus M_2$ is a decomposition into $F$-stable $X$-graded subspaces and that $M_1$ and $M_2$ are orthogonal with respect to $(\cdot,\cdot)$. Then the restrictions of $(\cdot,\cdot)$ to $M_1$ and $M_2$ are HR-forms again.
\end{remark}

\begin{lemma} \label{lemma-HRform} Suppose that $M$ is $F$-cyclic with highest weight $\lambda$ and that $(\cdot,\cdot)$ is a bilinear form that is non-degenerate on $M$ and satisfies properties (1), (3) and (4) of Definition \ref{def-formK}. Then property (2) is also satisfied. 
\end{lemma}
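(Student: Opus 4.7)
The plan is a simple case analysis on whether $\lambda\in I$; the $F$-cyclicity of $M$ will force $M_I$ to be either all of $M$ or the zero subspace, so non-degeneracy of the restriction follows immediately from the non-degeneracy of $(\cdot,\cdot)$ on $M$.

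Recall from the paragraph following the definition of $F$-cyclicity that $M_\lambda$ is one-dimensional, spanned by any nonzero generator $m\in M_\lambda$, and that $\supp M\subseteq\{\mu\in X\mid\mu\le\lambda\}$. If $\lambda\in I$, then $M_\lambda=Km\subseteq\bigoplus_{\mu\in I}M_\mu\subseteq M_I$; since $M_I$ is $F$-stable, this yields $M=\lgl m\rgl_F\subseteq M_I$, whence $M_I=M$. The restriction of $(\cdot,\cdot)$ to $M_I\times M_I$ thus coincides with the form on $M$ itself, which is non-degenerate by hypothesis.

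If on the other hand $\lambda\notin I$, then $I\cap\supp M=\emptyset$: any $\nu\in I\cap\supp M$ would satisfy $\nu\le\lambda$, and since $I$ is upwardly closed this would force $\lambda\in I$, a contradiction. Consequently $\bigoplus_{\mu\in I}M_\mu=0$ and $M_I=\lgl 0\rgl_F=0$, so the restriction is vacuously non-degenerate.

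There is no serious obstacle here: properties (1), (3), and (4) of the HR-form are not even used in this lemma, only non-degeneracy of $(\cdot,\cdot)$ on $M$ together with $F$-cyclicity. The real content is the dichotomy forced on $M_I$ by combining upward closedness of $I$ with the bound $\supp M\subseteq\{\mu\le\lambda\}$; any $I$ either already reaches the top weight (and then cyclicity floods $M_I$ down to all of $M$) or misses $\supp M$ entirely.
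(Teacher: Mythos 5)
Your proof is correct and is essentially the same argument as the paper's: both exploit the dichotomy that $F$-cyclicity with highest weight $\lambda$ forces $M_I=M$ when $\lambda\in I$ and $M_I=\{0\}$ when $\lambda\notin I$ (since upward closedness of $I$ together with $\supp M\subseteq\{\mu\le\lambda\}$ leaves no other possibility). Your write-up just spells out the details the paper leaves implicit.
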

\begin{proof} Let $I\subset X$ be a closed subset. As $M$ is cyclic with highest weight $\lambda$, we have $M_\mu=\{0\}$ unless $\mu\le\lambda$. Hence  $M_I=M$ if $\lambda\in I$, and $M_I=\{0\}$ in all other cases.
\end{proof}

\subsection{Simple root paths}
Our goal now is to construct a {\em universal} $F$-cyclic $X$-graded space with operators that admits an HR-form and  has a highest weight $\lambda\in X$. 

\begin{definition} A {\em simple root path} is a sequence $\ul\alpha:=(\alpha_1,\dots,\alpha_l)$ of not necessarily distinct simple roots $\alpha_1,\dots,\alpha_l\in\Pi$. Its  {\em length} is $l(\ul\alpha)=l$ and its {\em height}  is $\height(\ul\alpha):=\alpha_1+\dots+\alpha_l\in\DZ_{\ge0} \Pi\subset X$. We denote the {\em reversed simple root path} by  $\ul\alpha^r:=(\alpha_l,\dots,\alpha_1)$. For $i\in\{1,\dots,l\}$ we set $\ul\alpha^{(i)}:=(\alpha_1,\dots,\alpha_{i-1},\alpha_{i+1},\dots,\alpha_l)$.
\end{definition}
Since the steps in a path are given by simple roots, paths of the same height have the same length and they differ only by the order in which the simple roots appear. Clearly  $\height(\ul\alpha)=\height(\ul\alpha^r)$ and $\height(\ul\alpha^{(i)})=\height(\ul\alpha)-\alpha_i$.

Fix $\lambda\in X$ and let $P(\lambda)_\mu$ be the $K$-vector space that has as a basis  all paths of height $\lambda-\mu$. Set $P(\lambda):=\bigoplus_{\mu\in X} P(\lambda)_\mu$.  Hence we obtain $P(\lambda)$ from $P(0)$ by a simple grading shift by $-\lambda$, and $P(0)_\mu$ has a basis consisting of all simple root paths of weight $-\mu$. For $\alpha\in\Pi$  we denote by  $\varphi_\alpha\colon P(\lambda)\to P(\lambda)$ the $K$-linear map that maps a path $(\gamma_1,\dots,\gamma_l)$ to the path $(\alpha,\gamma_1,\dots,\gamma_l)$. Then $\varphi_\alpha$ is homogeneous of degree $-\alpha$. 
We denote by $\epsilon_\alpha\colon P(\lambda)\to P(\lambda)$  the $K$-linear map defined on paths by
$$
\epsilon_{\alpha}(\gamma_1,\dots,\gamma_l)=\sum_{i, \gamma_i=\alpha}\lgl\lambda-\gamma_{i+1}-\dots-\gamma_l,\alpha^\vee\rgl\ul\gamma^{(i)}.
$$
So $\epsilon_\alpha$ sends a path of length $l$ to a linear combination of paths of length $l-1$, and it maps $P(\lambda)_\mu$ to $P(\lambda)_{\mu+\alpha}$, i.e. it is homogeneous of degree $+\alpha$.

\begin{lemma}\label{lemma-comrel} Let $\alpha\in\Pi$. Then $[\epsilon_\alpha,\varphi_\alpha]|_{P(\lambda)_\mu}=\langle\mu,\alpha^\vee\rangle\cdot\id_{P(\lambda)_\mu}$ for all $\mu\in X$. If  $\beta\in\Pi$, $\alpha\ne\beta$, then $[\epsilon_\alpha,\varphi_\beta]=0$.
\end{lemma}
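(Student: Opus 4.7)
The plan is to verify both identities by a direct calculation on the basis of simple root paths, using only the explicit definitions of $\varphi_\alpha$ and $\epsilon_\alpha$. Fix a path $\ul\gamma=(\gamma_1,\dots,\gamma_l)$ of weight $\mu$, so that $\gamma_1+\dots+\gamma_l=\lambda-\mu$. The structural observation is that any $\varphi_\beta$ acts by inserting a new simple root in position $1$ and shifting the original indices up by one. Consequently, in the expansion of $\epsilon_\alpha(\varphi_\beta\ul\gamma)$ every tail sum and every underlying truncated path associated with an index $i\ge 2$ agrees with the corresponding datum in $\varphi_\beta(\epsilon_\alpha\ul\gamma)$. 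The only possible discrepancy between the two orders of composition comes from the freshly inserted entry in position $1$.

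First I would verify the commutation relation with matching simple roots: in the sum defining $\epsilon_\alpha\varphi_\alpha\ul\gamma=\epsilon_\alpha(\alpha,\gamma_1,\dots,\gamma_l)$, the new position $1$ contains $\alpha$ and therefore contributes a term. Its tail sum is $\gamma_1+\dots+\gamma_l=\lambda-\mu$, so the coefficient collapses to $\langle\lambda-(\lambda-\mu),\alpha^\vee\rangle=\langle\mu,\alpha^\vee\rangle$, and the path obtained by deleting position $1$ is $\ul\gamma$ itself. All remaining summands, those from positions $i\ge 2$ at which the entry equals $\alpha$, match term by term with those of $\varphi_\alpha\epsilon_\alpha\ul\gamma$. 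Subtracting gives $[\epsilon_\alpha,\varphi_\alpha]\ul\gamma=\langle\mu,\alpha^\vee\rangle\ul\gamma$ on every basis path of weight $\mu$, and hence on all of $P(\lambda)_\mu$.

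For $[\epsilon_\alpha,\varphi_\beta]$ with $\alpha\ne\beta$, the same bookkeeping applies, only more simply: the freshly inserted entry is $\beta$, not $\alpha$, so it contributes nothing to $\epsilon_\alpha$. The surviving summands of $\epsilon_\alpha\varphi_\beta\ul\gamma$ are in bijection with those of $\epsilon_\alpha\ul\gamma$, with the same coefficients and truncated paths, each then prepended by $\beta$. This is exactly what $\varphi_\beta\epsilon_\alpha\ul\gamma$ produces, and so the two operators commute on the basis.

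I do not foresee any real obstacle; the content of the lemma is just the observation that the freshly prepended simple root in $\varphi_\alpha$ is the unique source of a nontrivial commutator, and that its coefficient collapses to $\langle\mu,\alpha^\vee\rangle$ because its tail sum is the full sum of entries of $\ul\gamma$. The one point requiring care is to reconcile the index conventions: after prepending, the original entry $\gamma_j$ sits in position $j+1$ of the new path, so one must confirm that the tail sum for position $j+1$ of $\varphi_\beta\ul\gamma$ is identical to the tail sum for position $j$ of $\ul\gamma$. This is immediate, since both tails consist of the same sequence $\gamma_{j+1},\dots,\gamma_l$ of simple roots.
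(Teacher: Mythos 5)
Your proposal is correct and follows essentially the same route as the paper: expand $\epsilon_\alpha(\varphi_\beta\ul\gamma)$ on basis paths, observe that the summands at positions $i\ge 2$ reproduce $\varphi_\beta(\epsilon_\alpha\ul\gamma)$ exactly, and that the freshly prepended entry contributes the coefficient $\langle\lambda-(\lambda-\mu),\alpha^\vee\rangle=\langle\mu,\alpha^\vee\rangle$ precisely when $\beta=\alpha$. The paper's proof is the same computation written more compactly.
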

\begin{proof} Let $\ul\gamma=(\gamma_1,\dots,\gamma_l)$ be a path of height $\lambda-\mu$. Then
\begin{align*}
\epsilon_\alpha\varphi_\alpha(\ul\gamma)&=\epsilon_\alpha(\alpha,\gamma_1,\dots,\gamma_l) \\
&=\lgl\lambda-\sum_{j=1}^l\gamma_j ,\alpha^\vee\rgl (\gamma_1,\dots,\gamma_l) +\varphi_{\alpha}\epsilon_{\alpha}(\ul\gamma) \\
&=\lgl\mu,\alpha^\vee\rgl\ul\gamma+\varphi_\alpha\epsilon_\alpha(\ul\gamma)
\end{align*}
as $\lambda-\sum_{j=1}^l\gamma_j=\mu$. If $\alpha\ne\beta$, then the first summand in the sums above must be deleted, and hence $
\epsilon_\alpha\varphi_\beta(\ul\gamma)=\varphi_\beta\epsilon_\alpha(\ul\gamma)$.
\end{proof}

 For a path $\ul\alpha=(\alpha_1,\dots,\alpha_l)$ set $\varphi_{\ul\alpha}:=\varphi_{\alpha_1}\circ\dots\circ\varphi_{\alpha_l}$ and $\epsilon_{\ul\alpha}:=\epsilon_{\alpha_1}\circ\dots\circ\epsilon_{\alpha_l}$. 
 
\begin{lemma} \label{lemma-comrel2} For a path ${\ul\alpha}=(\alpha_1,\dots,\alpha_l)$, $\beta\in\Pi$  and $v\in P(\lambda)_\mu$ we have
\begin{align*}
[\epsilon_{\ul\alpha},\varphi_\beta](v)&=\sum_{i,\alpha_i=\beta}\lgl\mu+\alpha_{i+1}+\dots+\alpha_{l},\beta^\vee\rgl \epsilon_{\ul\alpha^{(i)}}(v).
\end{align*}

\end{lemma}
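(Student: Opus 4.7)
The plan is to proceed by induction on the length $l$ of $\ul\alpha$. The base case $l=1$ is exactly Lemma~\ref{lemma-comrel}: if $\alpha_1 = \beta$, the right-hand side reduces to its single $i=1$ term, which equals $\langle \mu, \beta^\vee\rangle v$ since $\ul\alpha^{(1)}$ is the empty path and the empty composition is the identity; if $\alpha_1 \ne \beta$ the sum on the right is empty and $[\epsilon_{\alpha_1}, \varphi_\beta] = 0$.

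For the inductive step, let $\ul\alpha' := (\alpha_2, \dots, \alpha_l)$, so that $\epsilon_{\ul\alpha} = \epsilon_{\alpha_1} \circ \epsilon_{\ul\alpha'}$. The Leibniz-type identity $[AB, C] = A[B,C] + [A,C]B$ gives
$$[\epsilon_{\ul\alpha}, \varphi_\beta] = \epsilon_{\alpha_1}\,[\epsilon_{\ul\alpha'}, \varphi_\beta] \,+\, [\epsilon_{\alpha_1}, \varphi_\beta]\,\epsilon_{\ul\alpha'}.$$
I would then evaluate each summand on $v \in P(\lambda)_\mu$ separately. To the first summand I apply the induction hypothesis to $\ul\alpha'$ (still at weight $\mu$); setting $i = j+1$, so that $i$ ranges over $\{2, \dots, l\}$ with $\alpha_i = \beta$, the identity $\epsilon_{\alpha_1} \circ \epsilon_{\ul\alpha'^{(i-1)}} = \epsilon_{\ul\alpha^{(i)}}$ together with the matching weight coefficient $\langle \mu + \alpha_{i+1} + \dots + \alpha_l, \beta^\vee\rangle$ reproduces precisely the contributions of the indices $i \ge 2$ in the claimed sum.

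For the second summand, I observe that $\epsilon_{\ul\alpha'}(v)$ lies in $P(\lambda)_{\mu + \alpha_2 + \dots + \alpha_l}$, so Lemma~\ref{lemma-comrel} yields $[\epsilon_{\alpha_1}, \varphi_\beta]\,\epsilon_{\ul\alpha'}(v) = 0$ unless $\alpha_1 = \beta$, in which case it equals $\langle \mu + \alpha_2 + \dots + \alpha_l, \beta^\vee\rangle\,\epsilon_{\ul\alpha'}(v)$. Since $\ul\alpha^{(1)} = \ul\alpha'$, this is exactly the $i=1$ contribution, and adding the two pieces gives the full formula.

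No serious obstacle is expected; the argument is essentially careful bookkeeping. The only point that warrants attention is to evaluate Lemma~\ref{lemma-comrel} at the correct weight when it is applied to $\epsilon_{\ul\alpha'}(v)$ rather than to $v$ itself, since this is precisely what produces the shifted coefficient $\langle \mu + \alpha_{i+1} + \dots + \alpha_l, \beta^\vee\rangle$ in the statement of the lemma.
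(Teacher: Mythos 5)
Your proof is correct, and it is precisely the argument the paper has in mind: the paper dismisses this lemma as ``an easy exercise using the commutation relations in Lemma \ref{lemma-comrel},'' and your induction on $l$ via the identity $[AB,C]=A[B,C]+[A,C]B$, with the weight of $\epsilon_{\ul\alpha'}(v)$ tracked correctly, is exactly that exercise carried out. No gaps.
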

\begin{proof} This is an easy exercise using the commutation relations in Lemma \ref{lemma-comrel}.
\end{proof}

\subsection{A bilinear form on $P(\lambda)$}
Define a bilinear form $(\cdot,\cdot)_{P(\lambda)}$ on $P(\lambda)$ by bilinear extension of the formulas 
$$
(\ul\alpha,\ul\beta)_{P(\lambda)}=\begin{cases}
0,& \text{ if $\height(\ul\alpha)\ne\height(\ul\beta)$},\\
\epsilon_{{\ul\alpha^r}}(\ul\beta), &\text{ if $\height(\ul\alpha)=\height(\ul\beta)$},
\end{cases}
$$
for  paths $\ul\alpha$ and $\ul\beta$. Note that $\height(\ul\alpha)=\height(\ul\beta)$ implies that $\epsilon_{{\ul\alpha^r}}(\ul\beta)$ is a $K$-multiple of the empty path, hence it can be identified with a scalar. 

\begin{lemma}\label{lemma-pathsbiform} 
\begin{enumerate}
\item The bilinear form $(\cdot,\cdot)_{P(\lambda)}$ is symmetric.
\item For  all $\alpha\in\Pi$ the map $\epsilon_\alpha$ is adjoint to $\varphi_\alpha$ with respect to $(\cdot,\cdot)_{P(\lambda)}$.
\end{enumerate}
\end{lemma}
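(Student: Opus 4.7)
The plan is to prove (2) first, since it follows essentially by unpacking the definition, and then to deduce (1) by a short induction that uses (2) together with an auxiliary dual-adjointness identity.

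For (2), it suffices by bilinearity to check on pairs of basis paths $\ul\beta=(\beta_1,\dots,\beta_l)$ and $\ul\gamma$. The prepended path $\varphi_\alpha\ul\beta=(\alpha,\beta_1,\dots,\beta_l)$ has reverse $(\beta_l,\dots,\beta_1,\alpha)$, so $\epsilon_{(\varphi_\alpha\ul\beta)^r}=\epsilon_{\ul\beta^r}\circ\epsilon_\alpha$, and the equality $(\varphi_\alpha\ul\beta,\ul\gamma)_{P(\lambda)}=\epsilon_{\ul\beta^r}(\epsilon_\alpha\ul\gamma)=(\ul\beta,\epsilon_\alpha\ul\gamma)_{P(\lambda)}$ reads off directly. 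When the heights of the two arguments do not match, both sides vanish: the left side by definition of $(\cdot,\cdot)_{P(\lambda)}$, the right side because $\epsilon_\alpha$ shifts weight by $+\alpha$ and therefore lands in a weight space orthogonal to $\ul\beta$.

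For (1), I plan an induction on path length $l$, with $l=0$ trivial. Assuming in addition the dual adjointness $(\epsilon_\alpha v,w)_{P(\lambda)}=(v,\varphi_\alpha w)_{P(\lambda)}$, the inductive step is a one-line calculation: writing $\ul\alpha=\varphi_{\alpha_1}\ul\alpha'$,
\[
(\ul\alpha,\ul\beta)=(\ul\alpha',\epsilon_{\alpha_1}\ul\beta)=(\epsilon_{\alpha_1}\ul\beta,\ul\alpha')=(\ul\beta,\varphi_{\alpha_1}\ul\alpha')=(\ul\beta,\ul\alpha),
\]
where the outer equalities use the two adjointnesses and the middle one uses the inductive hypothesis (symmetry at length $l-1$, extended by bilinearity). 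The dual adjointness itself I establish by a separate induction on $l(v)$: the base $l(v)=0$ is trivial since $\epsilon_\alpha v=0$ and the right side vanishes by height considerations. For the inductive step, writing $v=\varphi_{v_1}v'$, I commute $\epsilon_\alpha$ past $\varphi_{v_1}$ via Lemma~\ref{lemma-comrel}, apply (2), and invoke the inductive hypothesis to rewrite the left side as $(v',\varphi_\alpha\epsilon_{v_1}w)_{P(\lambda)}+\delta_{\alpha=v_1}\langle\wt(v'),\alpha^\vee\rangle(v',w)_{P(\lambda)}$. A parallel computation starting from $(v,\varphi_\alpha w)_{P(\lambda)}$---using (2) and commuting $\epsilon_{v_1}$ past $\varphi_\alpha$---produces the same expression but with $\wt(w)$ in place of $\wt(v')$. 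The resulting discrepancy $\delta_{\alpha=v_1}\langle\wt(v')-\wt(w),\alpha^\vee\rangle(v',w)_{P(\lambda)}$ vanishes identically, because whenever $(v',w)_{P(\lambda)}\ne 0$ the weight-orthogonality built into the definition of the form forces $\wt(v')=\wt(w)$.

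The main obstacle is spotting the auxiliary dual adjointness: a naive induction for (1) using only (2) fails to close, and the key conceptual move is the cancellation described above, where the commutator contributions on the two sides match precisely modulo weight-space orthogonality.
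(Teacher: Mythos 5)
Your argument is correct, but it is organized quite differently from the paper's. The paper proves (1) and (2) as two independent statements, each by a direct computation with the iterated commutator formula of Lemma \ref{lemma-comrel2}: symmetry by induction on length via $\epsilon_{\ul\alpha^r}\varphi_{\ul\beta}(\emptyset)=\epsilon_{\ul\beta^r}\varphi_{\ul\alpha}(\emptyset)$, and adjointness by expanding $\epsilon_\alpha$ on one side and commuting $\epsilon_{\ul\beta^r}$ past $\varphi_\alpha$ on the other, with the matching of coefficients coming from the identity $\lambda-\beta_{i+1}-\dots-\beta_l=\mu+\beta_1+\dots+\beta_{i-1}$. You instead isolate the identity $(\varphi_\alpha v,w)=(v,\epsilon_\alpha w)$, which is built into the definition of the form (since $\epsilon_{(\varphi_\alpha\ul\beta)^r}=\epsilon_{\ul\beta^r}\circ\epsilon_\alpha$), and then bootstrap both the \emph{other} adjointness $(\epsilon_\alpha v,w)=(v,\varphi_\alpha w)$ and the symmetry through two inductions that use only the single-step commutator of Lemma \ref{lemma-comrel}. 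One clarification is in order: what you label ``(2)'' is the free half of adjointness, whereas the paper's part (2) is precisely your ``dual adjointness''; since the form is not yet known to be symmetric these are genuinely distinct statements, but you do prove both, so the full content of the lemma is covered. Your cancellation $\langle\wt(v')-\wt(w),\alpha^\vee\rangle(v',w)=0$ via weight-orthogonality is the same mechanism as the paper's coefficient identity, just packaged as a commutator discrepancy. What your route buys is the avoidance of Lemma \ref{lemma-comrel2} altogether and a cleaner logical picture (the definition hands you one adjoint, the commutation relations force the other, and symmetry is then formal); what the paper's route buys is that (1) and (2) can each be read and verified independently without the auxiliary identity.
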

\begin{proof} (1) Let $\ul\alpha$ and $\ul\beta$ be paths. If $\height(\ul\alpha)\ne\height(\ul\beta)$, then $(\ul\alpha,\ul\beta)=0=(\ul\beta,\ul\alpha)$. So suppose that $\ul\alpha$ and $\ul\beta$ are of equal height and hence of equal length. We then have to show that  $\epsilon_{\ul\alpha^r}(\ul\beta)=\epsilon_{\ul\beta^r}(\ul\alpha)$ or 
$$
\epsilon_{\ul\alpha^r}\varphi_{\ul\beta}(\emptyset)=\epsilon_{{\ul\beta}^r}\varphi_{\ul\alpha}(\emptyset),
$$
where by $\emptyset\in P(\lambda)_\lambda$ we denote the empty path. Suppose $\ul\alpha=(\alpha_1,\dots,\alpha_l)$ and $\ul\beta=(\beta_1,\dots,\beta_l)$.  Using Lemma \ref{lemma-comrel2}  we obtain
\begin{align*}
\epsilon_{\ul\alpha^r}\varphi_{\ul\beta}(\emptyset)&=\epsilon_{\ul\alpha^r}\varphi_{\beta_1}\varphi_{\ul\beta^{(1)}}(\emptyset)\\
&=\sum_{i,\alpha_i=\beta_1}\lgl\mu+\alpha_{i-1}+\dots+\alpha_{1},\beta_1^\vee\rgl\epsilon_{{\ul\alpha}^{(i)r}}\varphi_{\ul\beta^{(1)}}(\emptyset) \\
&\quad + \varphi_{\beta_1}\epsilon_{\ul\alpha^r}\varphi_{\ul\beta^{(1)}}(\emptyset),
\end{align*}
where $\mu:=\lambda-\beta_2-\dots-\beta_l$ is the weight of $\varphi_{\ul\beta^{(1)}}(\emptyset)$. Now 
$\epsilon_{\ul\alpha^r}\varphi_{\ul\beta^{(1)}}(\emptyset)$ is a vector of weight $\lambda+\beta_1$, hence zero. Note that for all $i$ with $\beta_1=\alpha_i$ we have $\height(\ul\alpha^{(i)r})=\height(\ul\beta^{(1)})$. Since $l(\ul\beta^{(1)})=l(\ul\alpha^{(i)r})=l-1$ we can use an inductive argument and write $\epsilon_{{\ul\alpha}^{(i)r}}\varphi_{\ul\beta^{(1)}}(\emptyset)=\epsilon_{\ul\beta^{(1)r}}\varphi_{\ul\alpha^{(i)}}(\emptyset)$. So we obtain 
 $$
\epsilon_{\ul\alpha^r}\varphi_{\ul\beta}(\emptyset)=\sum_{i,\alpha_i=\beta_1}\lgl\mu+\alpha_{i-1}+\dots+\alpha_{1},\beta_1^\vee\rgl\epsilon_{{\ul\beta}^{(1)r}}\varphi_{\ul\alpha^{(i)}}(\emptyset).
$$

On the other hand, by the definition of the $\epsilon$-maps, 
\begin{align*}
\epsilon_{\ul\beta^r}\varphi_{\ul\alpha}(\emptyset)&=\epsilon_{\ul\beta^{(1)r}}\epsilon_{\beta_1}\varphi_{\ul\alpha}(\emptyset)\\
&=\epsilon_{\ul\beta^{(1)r}}(\sum_{i,\alpha_i=\beta_1}\lgl\lambda-\alpha_{i+1}-\dots-\alpha_l,\beta_1^\vee\rgl \varphi_{\ul\alpha^{(i)}})(\emptyset)\\
&\quad + \epsilon_{\ul\beta^{(1)r}}\varphi_{\ul\alpha}\epsilon_{\beta_1}(\emptyset) \\
&=\sum_{i,\alpha_i=\beta_1}\lgl\lambda-\alpha_{i+1}-\dots-\alpha_l,\beta_1^\vee\rgl\epsilon_{\ul\beta^{(1)r}} \varphi_{\ul\alpha^{(i)}}(\emptyset),
\end{align*}
as $\epsilon_{\beta_1}(\emptyset)=0$ by definition. 
Now for all $i$ with $\alpha_i=\beta_1$ we have $\beta_2+\dots+\beta_l=\alpha_1+\dots+\alpha_{i-1}+\alpha_{i+1}+\dots+\alpha_l$, hence $\lambda-\alpha_{i+1}-\dots-\alpha_l=\mu+\alpha_{i-1}+\dots+\alpha_1$. A comparison of the above equations yields $\epsilon_{\ul\alpha^r}\varphi_{\ul\beta}(\emptyset)=\epsilon_{\ul\beta^r}\varphi_{\ul\alpha}(\emptyset)$, which is what we wanted to show.

(2)  We need to check $(\epsilon_\alpha(v),w)=(v,\varphi_\alpha(w))$ for all  $v,w\in P(\lambda)$. It is sufficient to check this in the case that $v=\ul\beta=(\beta_1,\dots,\beta_l)$ and $w=\ul\gamma=(\gamma_1,\dots,\gamma_r)$.   We can also assume that the $\height(\ul\beta)-\alpha=\height(\ul\gamma)$,  because otherwise both sides of the equation vanish. Now
\begin{align*}
(\epsilon_{\alpha}(\ul\beta),\ul\gamma)&=(\sum_{i,\beta_i=\alpha}\lgl\lambda-\beta_{i+1}-\dots-\beta_l,\alpha^\vee\rgl \ul\beta^{(i)},\ul\gamma)\\
&=\sum_{i,\beta_i=\alpha}\lgl\lambda-\beta_{i+1}-\dots-\beta_l,\alpha^\vee\rgl(\ul\beta^{(i)},\ul\gamma)\\
&=\sum_{i,\beta_i=\alpha}\lgl\lambda-\beta_{i+1}-\dots-\beta_l,\alpha^\vee\rgl\epsilon_{\ul\beta^{(i)r}}\varphi_{\ul\gamma}(\emptyset).
\end{align*}
On the other hand,
\begin{align*}
(\ul\beta,\varphi_{\alpha}(\ul\gamma))&=\epsilon_{\ul\beta^r}\varphi_{\alpha}\varphi_{\ul\gamma}(\emptyset)\\
&=\sum_{i,\beta_i=\alpha}\lgl\mu+\beta_1+\dots+\beta_{i-1},\alpha^\vee\rgl\epsilon_{\ul\beta^{(i)r}}\varphi_{\ul\gamma}(\emptyset)\\
&\quad +\varphi_{\alpha}\epsilon_{\ul\beta^r}\varphi_{\ul\gamma}(\emptyset) \\
&=\sum_{i,\beta_i=\alpha}\lgl\mu+\beta_1+\dots+\beta_{i-1},\alpha^\vee\rgl\epsilon_{\ul\beta^{(i)r}}\varphi_{\ul\gamma}(\emptyset)
\end{align*}
(as $\epsilon_{\ul\beta^r}\varphi_{\ul\gamma}(\emptyset)=0$ by weight considerations), where $\mu=\lambda-\height(\gamma)=\lambda-\height(\beta)+\alpha$ is the weight of $\varphi_{\ul\gamma}(\emptyset)$. For any $i$ with $\beta_i=\alpha$ we deduce
\begin{align*}
\lambda-\beta_{i+1}-\dots-\beta_l&=\mu+\beta_1+\dots+\beta_i-\alpha\\
&=\mu+\beta_1+\dots+\beta_{i-1}.
\end{align*}
A comparison of the obtained formulas yields $(\epsilon_{\alpha}(\ul\beta),\ul\gamma)=(\ul\beta,\varphi_{\alpha}(\ul\gamma))$.
\end{proof}

\subsection{The standard objects $\LDelta(\lambda)$} 
We   define $\LDelta(\lambda):=P(\lambda)/\rad (\cdot,\cdot)_{P(\lambda)}$ and denote by $(\cdot,\cdot)_{\LDelta(\lambda)}$ the bilinear form  on $\LDelta(\lambda)$ induced by $(\cdot,\cdot)_{P(\lambda)}$. This is a non-degenerate form on $\LDelta(\lambda)$. As the weight space decomposition of $P(\lambda)$ is orthogonal with respect to $(\cdot,\cdot)_{P(\lambda)}$, the radical is a graded subspace. Hence $\LDelta(\lambda)$ inherits a grading $\LDelta(\lambda)=\bigoplus_{\mu\in X}\LDelta(\lambda)_\mu$.  Let $\alpha\in\Pi$. As $\varphi_{\alpha}$ has an adjoint homomorphism $\epsilon_{\alpha}$, it stabilizes the radical, hence induces  a $K$-linear operator $F_\alpha$ on $\LDelta(\lambda)$. Clearly $F_\alpha$ is homogeneous of degree $-\alpha$.  

\begin{lemma}  Let  $\lambda\in X$.
\begin{enumerate}
\item  The data $\LDelta(\lambda)$ and $\{F_\alpha\}_{\alpha\in\Pi}$ satisfy  $(\ast)_K$.
\item $\LDelta(\lambda)$ is $F$-cyclic with highest weight $\lambda$.
\item  $(\cdot,\cdot)_{\LDelta(\lambda)}$ is an HR-form on $\LDelta(\lambda)$.
\end{enumerate}
\end{lemma}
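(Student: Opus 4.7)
The plan is to deduce everything by transporting the structures on $P(\lambda)$ through the quotient by $\rad(\cdot,\cdot)_{P(\lambda)}$. First I would establish (2), from which (1) follows immediately. Observe that $P(\lambda)$ itself is $F$-cyclic with highest weight $\lambda$ via the empty path $\emptyset\in P(\lambda)_\lambda$: every path $(\alpha_1,\dots,\alpha_l)$ equals $\varphi_{\alpha_1}\circ\cdots\circ\varphi_{\alpha_l}(\emptyset)$, and the support of $P(\lambda)$ lies in $\{\mu\mid\mu\le\lambda\}$ because heights are non-negative integral combinations of simple roots. To transfer cyclicity to $\LDelta(\lambda)$, the only thing to check is that $[\emptyset]\ne 0$ in the quotient, i.e.\ that $\emptyset\notin\rad(\cdot,\cdot)_{P(\lambda)}$. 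But $\epsilon_\emptyset$ is, by convention, the empty composition, hence the identity on $P(\lambda)$, and therefore $(\emptyset,\emptyset)_{P(\lambda)}=\epsilon_{\emptyset^r}(\emptyset)=1$. This gives (2), and $(\ast)_K$ in (1) is then immediate since $\LDelta(\lambda)=\lgl \{[\emptyset]\}\rgl_F$.

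For (3) I would check the four clauses of Definition \ref{def-formK} for $(\cdot,\cdot)_{\LDelta(\lambda)}$. Symmetry is inherited from Lemma \ref{lemma-pathsbiform}(1). The orthogonality of distinct weight spaces descends from $P(\lambda)$, where two paths of distinct heights pair to zero by construction. Non-degeneracy of $(\cdot,\cdot)_{\LDelta(\lambda)}$ on all of $\LDelta(\lambda)$ is by the very definition of the radical quotient.

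The substance of the argument is clause (4). By Lemma \ref{lemma-pathsbiform}(2), $\epsilon_\alpha$ is adjoint to $\varphi_\alpha$ on $P(\lambda)$. This mutual adjointness forces $\rad(\cdot,\cdot)_{P(\lambda)}$ to be stable under both $\varphi_\alpha$ and $\epsilon_\alpha$: for $r$ in the radical and any $w\in P(\lambda)$, $(\varphi_\alpha(r),w)=(r,\epsilon_\alpha(w))=0$ and $(\epsilon_\alpha(r),w)=(r,\varphi_\alpha(w))=0$. Hence $\epsilon_\alpha$ descends to an operator $E_\alpha$ on $\LDelta(\lambda)$ of degree $+\alpha$ that is adjoint to $F_\alpha$ with respect to $(\cdot,\cdot)_{\LDelta(\lambda)}$. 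The required identities $[E_\alpha,F_\alpha]|_{\LDelta(\lambda)_\mu}=\langle\mu,\alpha^\vee\rangle\cdot\id$ and $[E_\alpha,F_\beta]=0$ for $\alpha\ne\beta$ are then read off from Lemma \ref{lemma-comrel} at the path level and passed to the quotient.

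Finally, clause (2) of Definition \ref{def-formK}, namely non-degeneracy of the restriction to $\LDelta(\lambda)_I\times\LDelta(\lambda)_I$ for every closed $I\subset X$, is obtained from Lemma \ref{lemma-HRform}: $\LDelta(\lambda)$ is $F$-cyclic with highest weight $\lambda$, the induced form is globally non-degenerate, and clauses (1), (3), (4) are already in hand. I do not foresee a genuine obstacle; the only point requiring a moment's thought is the verification that $\emptyset$ survives in $\LDelta(\lambda)$, since otherwise both the cyclicity statement and the highest weight $\lambda$ would be empty. Everything else is bookkeeping for how the path-level structures descend through the radical quotient.
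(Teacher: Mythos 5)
Your proposal is correct and follows essentially the same route as the paper: establish $F$-cyclicity via the coset of the empty path, descend symmetry, weight-space orthogonality, the operators $\varphi_\alpha,\epsilon_\alpha$ and the commutation relations of Lemma \ref{lemma-comrel} through the radical quotient, and then invoke Lemma \ref{lemma-HRform} to get property (2) of Definition \ref{def-formK}. Your explicit check that $(\emptyset,\emptyset)_{P(\lambda)}=1$, so that $[\emptyset]\ne 0$ in $\LDelta(\lambda)$, is a small point the paper leaves implicit, but it is not a different argument.
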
 

\begin{proof}  As $\LDelta(\lambda)$ is generated over the $F$-maps by the coset of the empty path, which is of weight $\lambda$,   $(\ast)_K$  is satisfied and $\LDelta(\lambda)$ is $F$-cyclic with highest weight $\lambda$.  The bilinear form $(\cdot,\cdot)_{\LDelta(\lambda)}$ is non-degenerate and it is symmetric because $(\cdot,\cdot)_{P(\lambda)}$ is symmetric by Lemma \ref{lemma-pathsbiform}. The weight space decomposition is orthogonal because that property holds for $P(\lambda)$ by construction.  From  Lemma \ref{lemma-pathsbiform} we deduce that the linear endomorphism $E_\alpha$ on $\LDelta(\lambda)$ that is adjoint to  $F_\alpha$ is induced from $\epsilon_\alpha$. Then Lemma \ref{lemma-comrel} implies that  $E_\alpha$ and $F_\alpha$ satisfy the  commutation relations in Definition \ref{def-formK}. Hence properties (1), (3) and (4) of an HR-form are satisfied. As $\LDelta(\lambda)$ is $F$-cyclic with highest weight $\lambda$, we can use Lemma \ref{lemma-HRform} to deduce that  $(\cdot,\cdot)_{\LDelta(\lambda)}$ is an HR-form. 
\end{proof}

\subsection{The universal property of $\LDelta(\lambda)$}
Suppose that  $M$, $\{F_\alpha\}_{\alpha\in\Pi}$ satisfy  $(\ast)_K$  and fix an HR-form  $(\cdot,\cdot)$  on $M$. As before we denote by $E_\alpha\colon M\to M$ the adjoint of $F_\alpha$ with respect to $(\cdot,\cdot)$. 
For any simple root path $\ul\alpha=(\alpha_1,\dots,\alpha_l)$ we define 
$$
F_{\ul\alpha}:=F_{\alpha_1}\cdots F_{\alpha_l}\colon M\to M,\quad E_{\ul\alpha}:=E_{\alpha_1}\cdots E_{\alpha_l}\colon M\to M.
$$

\begin{lemma}\label{lemma-comrelcyc} Let $\nu\in X$ and   $m\in M_\nu$. For any simple root path $\ul\beta=(\beta_1,\dots,\beta_l)$ and $\alpha\in\Pi$ we have
$$
[E_\alpha, F_{\ul\beta}](m)=\sum_{i,\alpha=\beta_i}\lgl\nu-\beta_{i+1}-\dots-\beta_l,\alpha^\vee\rgl F_{\ul\beta^{(i)}}(m).
$$
\end{lemma}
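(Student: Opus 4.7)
The statement is the exact analogue of Lemma \ref{lemma-comrel2}, with $F_\alpha$ in place of $\varphi_\alpha$ and $E_\alpha$ in place of $\epsilon_\alpha$, acting on an arbitrary weight vector $m\in M_\nu$ instead of on the empty path $\emptyset\in P(\lambda)_\lambda$. Accordingly, my plan is to mimic the proof of Lemma \ref{lemma-comrel2} by induction on the length $l$ of the path $\ul\beta$, using only the two commutation relations in property (4) of Definition \ref{def-formK}. The case $l=0$ is trivial (both sides are zero). For $l=1$: if $\alpha=\beta_1$, then $[E_\alpha,F_{\beta_1}](m)=\lgl\nu,\alpha^\vee\rgl m$ by the first relation, which matches the single summand on the right (with the partial sum $\beta_{i+1}+\dots+\beta_l$ empty); if $\alpha\ne\beta_1$ both sides vanish by the second relation.

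For the inductive step I would split $F_{\ul\beta}=F_{\beta_1}F_{\ul\beta^\prime}$ with $\ul\beta^\prime=(\beta_2,\dots,\beta_l)$ and apply the Leibniz rule
$$[E_\alpha,F_{\ul\beta}]\;=\;[E_\alpha,F_{\beta_1}]\,F_{\ul\beta^\prime}\;+\;F_{\beta_1}\,[E_\alpha,F_{\ul\beta^\prime}].$$
The first summand, evaluated at $m\in M_\nu$, is analyzed via the homogeneity of $F_{\ul\beta^\prime}(m)$, which has weight $\nu-\beta_2-\dots-\beta_l$: when $\alpha=\beta_1$ the first commutation relation produces $\lgl\nu-\beta_2-\dots-\beta_l,\alpha^\vee\rgl F_{\ul\beta^\prime}(m)$, which is precisely the $i=1$ term of the desired right-hand side since $\ul\beta^{(1)}=\ul\beta^\prime$; when $\alpha\ne\beta_1$ this summand vanishes, matching the absence of an $i=1$ contribution. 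For the second summand I would apply the inductive hypothesis to the shorter path $\ul\beta^\prime$ and the same vector $m\in M_\nu$. After reindexing $j\in\{1,\dots,l-1\}$ in $\ul\beta^\prime$ as $i=j+1\ge 2$ in $\ul\beta$, and using the identity $F_{\beta_1}F_{\ul\beta^{\prime(i-1)}}=F_{\ul\beta^{(i)}}$ together with the fact that the partial sums $\beta^\prime_{j+1}+\dots+\beta^\prime_{l-1}$ coincide with $\beta_{i+1}+\dots+\beta_l$, this produces exactly the summands with $i\ge 2$ in the claimed identity.

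The expected main obstacle is purely bookkeeping: keeping track of which index in $\ul\beta^\prime$ corresponds to which index in $\ul\beta$, and checking that the scalars $\lgl\nu-\beta_{i+1}-\dots-\beta_l,\alpha^\vee\rgl$ come out correctly under the reindexing. There is no conceptual difficulty beyond the Leibniz rule and the two defining commutation relations of Definition \ref{def-formK}(4); in particular, no Serre-type relation, no non-degeneracy, and no symmetry of $(\cdot,\cdot)$ is used for this lemma. Note also that this proof is strictly a formal consequence of the commutator relations, and so the identity would in fact hold for \emph{any} pair of families of operators $\{F_\alpha\}$, $\{E_\alpha\}$ satisfying those two relations, independently of the HR-form framework.
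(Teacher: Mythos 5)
Your proof is correct and follows exactly the route the paper intends: the paper's own proof of this lemma is the one-line remark that it is ``an easy consequence of the commutation relations in the definition of an HR-form,'' and your induction on $l(\ul\beta)$ via the Leibniz rule $[E_\alpha,F_{\beta_1}F_{\ul\beta'}]=[E_\alpha,F_{\beta_1}]F_{\ul\beta'}+F_{\beta_1}[E_\alpha,F_{\ul\beta'}]$ is precisely the omitted verification, with the reindexing and weight bookkeeping carried out correctly. Your closing observation that the identity is a formal consequence of the two relations in Definition \ref{def-formK}(4) alone is also accurate and consistent with how the paper uses the lemma.
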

\begin{proof} This is an easy consequence of the commutation relations in the definition of an HR-form. 
\end{proof}


\begin{proposition} \label{prop-univprop} Let $\lambda\in X$ and  suppose that $M$ is $F$-cyclic with highest weight $\lambda$. Then there exists an isomorphism   $f\colon \LDelta(\lambda)\to M$ of $X$-graded $K$-vector spaces  such that $f\circ F_\alpha=F_\alpha\circ f$ for all $\alpha\in\Pi$, and a non-zero $c\in K$ with  $(f(x),f(y))=c(x,y)_{\LDelta(\lambda)}$ for all $x,y\in\LDelta(\lambda)$.
\end{proposition}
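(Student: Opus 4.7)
The plan is to realize $\LDelta(\lambda)$ as a universal object by constructing $f$ from a map out of $P(\lambda)$ that sends each path to the corresponding composition of operators applied to a highest weight generator. First I would fix a generator $m \in M_\lambda$ with $m \ne 0$ and define $\tilde f \colon P(\lambda) \to M$ on the path basis by $\tilde f(\ul\alpha) := F_{\ul\alpha}(m)$. By construction $\tilde f$ is homogeneous of degree $0$, satisfies $\tilde f \circ \varphi_\alpha = F_\alpha \circ \tilde f$ for every $\alpha \in \Pi$, and is surjective because $M$ is $F$-cyclic with generator $m$.

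The central step is to verify also the raising-operator intertwining $\tilde f \circ \epsilon_\alpha = E_\alpha \circ \tilde f$. Since $m$ has weight $\lambda$ and $M_\mu = 0$ for $\mu \not\le \lambda$, we have $E_\alpha(m) = 0$; then Lemma \ref{lemma-comrelcyc} applied to $E_\alpha F_{\ul\beta}(m)$ expresses this vector as exactly the linear combination of the $F_{\ul\beta^{(i)}}(m)$ whose coefficients are those defining $\epsilon_\alpha(\ul\beta)$ on the path side, so applying $\tilde f$ to $\epsilon_\alpha(\ul\beta)$ produces the same expression.

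Given both intertwinings, I would derive the identity
$$
(\tilde f(x), \tilde f(y))_M = (m,m)_M \cdot (x,y)_{P(\lambda)}
$$
for all $x,y \in P(\lambda)$. For basis paths $\ul\alpha, \ul\beta$ of equal height the left hand side rewrites by adjunction as $(m, E_{\ul\alpha^r} F_{\ul\beta}(m))_M = (m, \tilde f(\epsilon_{\ul\alpha^r} \varphi_{\ul\beta}(\emptyset)))_M$, and by the defining formula of the path form $\epsilon_{\ul\alpha^r} \varphi_{\ul\beta}(\emptyset)$ equals $(\ul\alpha,\ul\beta)_{P(\lambda)} \cdot \emptyset$; for paths of different heights both sides vanish by weight-space orthogonality. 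The scalar $(m,m)_M$ is non-zero because $M_\lambda = Km$ is orthogonal to every other weight space, so non-degeneracy of the HR-form restricts to a non-degenerate form on $Km \times Km$.

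From the identity, $\rad(\cdot,\cdot)_{P(\lambda)} \subseteq \ker \tilde f$ (using surjectivity of $\tilde f$ together with non-degeneracy of $(\cdot,\cdot)_M$), so $\tilde f$ descends to a surjective map $f \colon \LDelta(\lambda) \to M$ commuting with each $F_\alpha$. The identity descends to $(f(x),f(y))_M = (m,m)_M \cdot (x,y)_{\LDelta(\lambda)}$, and non-degeneracy of both forms then forces $f$ to be injective, giving the desired isomorphism with $c = (m,m)_M$. The main obstacle is the raising intertwining $\tilde f \circ \epsilon_\alpha = E_\alpha \circ \tilde f$; once that is in hand, the preservation of the bilinear form up to a non-zero scalar and hence the isomorphism property are formal consequences of adjunction and the universal definition of $(\cdot,\cdot)_{P(\lambda)}$ via the $\epsilon$-maps.
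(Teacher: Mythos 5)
Your proposal is correct and follows essentially the same route as the paper: define $\tilde f$ on paths by $\ul\alpha\mapsto F_{\ul\alpha}(m)$, establish both intertwinings $\tilde f\circ\varphi_\alpha=F_\alpha\circ\tilde f$ and $\tilde f\circ\epsilon_\alpha=E_\alpha\circ\tilde f$ (the latter via Lemma \ref{lemma-comrelcyc} and $E_\alpha(m)=0$), deduce $(\tilde f(x),\tilde f(y))=c(x,y)_{P(\lambda)}$ with $c=(m,m)\ne 0$, and pass to the quotient by the radical. Your use of the raising intertwining to identify $(m,E_{\ul\alpha^r}F_{\ul\beta}(m))$ with $c\,\epsilon_{\ul\alpha^r}\varphi_{\ul\beta}(\emptyset)$ is just a slightly more streamlined phrasing of the paper's ``same commutation relations'' comparison of the scalars $\xi$ and $\zeta$.
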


\begin{proof} Let $m\in M_\lambda$ be a non-zero element. Then  $c:=(m,m)\ne 0$ and  $M=\lgl\{m\}\rgl_F$. We first construct a homomorphism $\tilde f\colon P(\lambda)\to M$ by  linear extension of the rules $\tilde f(\ul\alpha)=F_{\ul\alpha}(m)$ for all paths $\ul\alpha$. Clearly, $\tilde f$ is a surjective $X$-graded homomorphism and $\tilde f\circ\varphi_\alpha=F_\alpha\circ\tilde f$ for all $\alpha\in\Pi$. The definition of $\epsilon_{\alpha}$ and Lemma \ref{lemma-comrelcyc}  (note that $E_\alpha(m)=0$) show that also $\tilde f\circ\epsilon_{\alpha}=E_\alpha\circ\tilde f$. Now $(\tilde f(\ul\alpha),\tilde f(\ul\beta))=0=c(\ul\alpha,\ul\beta)_{P(\lambda)}$ if $\height(\ul\alpha)\ne\height(\ul\beta)$. If $\height(\ul\alpha)=\height(\ul\beta)$, then 
\begin{align*}
(\tilde f(\ul\alpha),\tilde f(\ul\beta))&=(F_{\ul\alpha}(m),F_{\ul\beta}(m))\\
&=(m,E_{\ul\alpha^r}F_{\ul\beta}(m))
\end{align*}
and $E_{\ul\alpha^r}F_{\ul\beta}(m)=\xi m$ for some $\xi\in K$. Likewise, $(\ul\alpha,\ul\beta)_{P(\lambda)}=\zeta$, where $\zeta\in K$ is such that $\epsilon_{\ul\alpha^r}\varphi_{\ul\beta}(\emptyset)=\zeta\emptyset$. As both $m$ and $\emptyset$ are annihilated by the $E_{\alpha}$'s and the $\epsilon_{\alpha}$'s, and as the commutation relations between the $E_{\alpha}$'s and  the $F_{\beta}$'s are the same as between the $\epsilon_{\alpha}$'s and the  $\varphi_{\beta}$'s, we can deduce $\xi=\zeta$, hence
$
(\tilde f(\ul\alpha),\tilde f(\ul\beta))=(\ul\alpha,\ul\beta)_{P(\lambda)}
$
also in the case of the same height. By bilinear extension we obtain
$$
(\tilde f(x),\tilde f(y))=c(x,y)_{P(\lambda)}
$$ 
for all $x,y\in P(\lambda)$. 

As $\tilde f$ is surjective and $(\cdot,\cdot)$ is non-degenerate, the radical of $(\cdot,\cdot)_{P(\lambda)}$ is contained in the kernel of $\tilde f$. So $\tilde f$ induces a homomorphism $f\colon \LDelta(\lambda)\to M$. It has the property $(f(x),f(y))=c(x,y)_{\LDelta(\lambda)}$ for all $x,y\in \LDelta(\lambda)$. As $(\cdot,\cdot)_{\LDelta(\lambda)}$ is non-degenerate, $f$ is injective. It is surjective as $\tilde f$ was surjective, so it is an isomorphism of $K$-vector spaces. By construction, it is graded and commutes with the $F$-maps. 
\end{proof}

\subsection{The decomposition into $F$-cyclic subspaces}
Let $M$, $\{F_\alpha\}_{\alpha\in\Pi}$, $(\cdot,\cdot)$, $\{E_\alpha\}_{\alpha\in\Pi}$ be as in the preceding subsection.  We say that a subset $T$ of $M$ is {\em $E$-stable} if $E_\alpha(T)\subset T\cup\{0\}$ for all $\alpha\in\Pi$.

\begin{lemma} \label{lemma-Estab} Suppose that $T$ is $E$-stable. Then $\lgl T\rgl_F$ is $E$-stable. 
\end{lemma}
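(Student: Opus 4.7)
The plan is to work with the spanning description of $\lgl T\rgl_F$ as the linear span of elements of the form $F_{\ul\beta}(t)$, where $\ul\beta = (\beta_1,\dots,\beta_l)$ runs over the simple root paths (including the empty path) and $t$ runs over $T$. By linearity of $E_\alpha$, it suffices to prove $E_\alpha F_{\ul\beta}(t) \in \lgl T\rgl_F$ for each such spanning element. Since both $E_\alpha$ and $F_\alpha$ are homogeneous, the set $T^{\gr}$ of all homogeneous components of elements of $T$ is still $E$-stable (for any $t=\sum t_\mu\in T$, the $(\mu+\alpha)$-weight piece of $E_\alpha(t)\in T\cup\{0\}$ equals $E_\alpha(t_\mu)$, hence lies in $T^{\gr}$), and this reduces us to the case where $t$ is a weight vector, say $t\in M_\nu$.

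The central computation is then the identity
\[
E_\alpha F_{\ul\beta}(t) = F_{\ul\beta}(E_\alpha(t)) + [E_\alpha, F_{\ul\beta}](t).
\]
The first summand lies in $\lgl T\rgl_F$: by the $E$-stability of $T$ we have $E_\alpha(t)\in T\cup\{0\}$, and $\lgl T\rgl_F$ is $F$-stable by construction, so applying $F_{\ul\beta}$ keeps us inside. For the commutator summand I would invoke Lemma~\ref{lemma-comrelcyc}, which gives the explicit formula
\[
[E_\alpha, F_{\ul\beta}](t) = \sum_{i,\,\alpha=\beta_i}\lgl\nu-\beta_{i+1}-\dots-\beta_l,\alpha^\vee\rgl\,F_{\ul\beta^{(i)}}(t).
\]
Each summand on the right is of the form $F_{\ul\gamma}(t)$ with the original $t\in T$, and is therefore another spanning element of $\lgl T\rgl_F$. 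Adding the two pieces, $E_\alpha F_{\ul\beta}(t)\in\lgl T\rgl_F$, as required.

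The main structural ingredient is Lemma~\ref{lemma-comrelcyc}: it collapses $[E_\alpha,F_{\ul\beta}](t)$ in one step into a sum of $F$'s applied to the \emph{same} weight vector $t$, with no operators escaping onto elements outside $T$. Without that lemma one would have to iterate the commutation $[E_\alpha,F_{\beta_1}]=H_\alpha$ (in the case $\alpha=\beta_1$) inductively on path length, and each appearance of $H_\alpha$ would threaten to produce a vector that is not obviously in $\lgl T\rgl_F$. Because Lemma~\ref{lemma-comrelcyc} is already available, the proof collapses to a single application combined with the $E$-stability of $T$ and the $F$-stability of $\lgl T\rgl_F$.
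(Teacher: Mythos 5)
Your core argument is exactly the paper's: $\lgl T\rgl_F$ is spanned by the vectors $F_{\ul\beta}(t)$ with $t\in T$, and Lemma \ref{lemma-comrelcyc} rewrites $E_\alpha F_{\ul\beta}(t)$ as $F_{\ul\beta}(E_\alpha(t))$ plus a linear combination of the $F_{\ul\beta^{(i)}}(t)$, all of which lie in $\lgl T\rgl_F$ once $E_\alpha(t)\in T\cup\{0\}$. The one place you add something is the preliminary reduction to homogeneous $t$ by passing to the set of homogeneous components of elements of $T$, and that step does not hold up: when $T$ is not graded, $\lgl T\rgl_F$ need not be a graded subspace, so it can be strictly smaller than the span generated by the components, and putting $E_\alpha F_{\ul\beta}(t_\nu)$ into the latter does not put $E_\alpha F_{\ul\beta}(t)$ into the former. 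In fact the statement genuinely fails for non-graded $T$: for $\fsl_2$ take $M=L(2)\oplus L(1)$ with highest weight vectors $v,u$ and $T=\{v+u\}$; then $E(v+u)=0$ so $T$ is $E$-stable, but $E(F(v+u))=2v+u$ does not lie in the span of $v+u$, $F(v+u)$, $F^2(v+u)$. This defect is inherited from the paper rather than introduced by you --- Lemma \ref{lemma-comrelcyc} is only stated for weight vectors, the paper's own one-line proof tacitly assumes $T$ consists of them, and in every application ($T=\bigoplus_{\mu\in I}M_\mu$, or $T=\{m_i\}$ with $m_i$ homogeneous) it does. For such $T$ your proof is complete and identical in substance to the paper's; just drop the reduction and state the homogeneity hypothesis instead of trying to derive it.
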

\begin{proof} Note that $\lgl T\rgl_F$ is generated, as a vector space, by the sets $F_{\ul\beta}(T)$, where $\ul\beta$ runs through all simple root paths. The statement follows now from Lemma \ref{lemma-comrelcyc}. 
\end{proof}

Let $I$ be a closed subset of $X$. As $(\cdot,\cdot)$ is non-degenerate on $M\times M$ and on $M_I\times M_I$, and as, the weight space decomposition is an orthogonal decomposition into finite dimensional spaces,  we obtain $M=M_I\oplus M_I^\perp$, where $M_I^\perp=\{m\in M\mid (m,n)=0 \text{ for all $n\in M_I$}\}$.

\begin{lemma} \label{lemma-orthdec} The following holds.
\begin{enumerate}
\item Both spaces $M_I$ and $M_I^\perp$ are $E$- and $F$-stable subspaces of $M$.
\item The restriction of $(\cdot,\cdot)$ to either $M_I$ or $M_I^\perp$ is again an HR-form.
\end{enumerate}
\end{lemma}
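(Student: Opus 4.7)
My plan is to prove (1) first by a standard adjunction argument, and then derive (2) by establishing a compatibility between the $F$-closure construction and the orthogonal decomposition $M=M_I\oplus M_I^\perp$.

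For part (1), the subspace $M_I$ is $F$-stable by its very definition. To see it is also $E$-stable, I would observe that $E_\alpha$ is homogeneous of degree $+\alpha$ (since it is the adjoint of $F_\alpha$ and the weight space decomposition is orthogonal), so the set $T:=\bigoplus_{\mu\in I}M_\mu$ is $E$-stable because $I$ is closed. Then Lemma \ref{lemma-Estab} applied to $T$ yields that $M_I=\langle T\rangle_F$ is $E$-stable. For $M_I^\perp$, both stabilities follow by adjunction: if $m\in M_I^\perp$ and $n\in M_I$, then $(F_\alpha(m),n)=(m,E_\alpha(n))=0$ since $E_\alpha(n)\in M_I$, so $F_\alpha(m)\in M_I^\perp$; analogously $E_\alpha(m)\in M_I^\perp$.

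For part (2), the symmetry and orthogonality of the weight decomposition are obviously inherited. Since $M_I$ is both $E$- and $F$-stable, the restriction $E_\alpha|_{M_I}$ is precisely the adjoint of $F_\alpha|_{M_I}$ with respect to $(\cdot,\cdot)|_{M_I\times M_I}$, so the commutation relations in property (4) are inherited verbatim. The crux is property (2), non-degeneracy of the restriction to $(M_I)_J\times(M_I)_J$ for every closed $J\subset X$. My plan is to show the orthogonal decomposition
\[
M_J=(M_I)_J\oplus (M_I^\perp)_J.
\]
The inclusion ``$\supset$'' is immediate: each summand is $F$-stable and contained in $M_J$ (since $(M_I)_\mu\subset M_\mu\subset M_J$ for $\mu\in J$, and analogously for $M_I^\perp$). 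For ``$\subset$'', the sum on the right is $F$-stable and for $\mu\in J$ contains $(M_I)_\mu+(M_I^\perp)_\mu=M_\mu$ (using the graded decomposition $M=M_I\oplus M_I^\perp$), so it contains the smallest such $F$-stable subspace $M_J$. The sum is direct and orthogonal because $M_I\cap M_I^\perp=\{0\}$ and the two summands lie in mutually orthogonal subspaces. Since $(\cdot,\cdot)|_{M_J\times M_J}$ is non-degenerate by hypothesis and the decomposition is orthogonal, the restriction to $(M_I)_J$ (and to $(M_I^\perp)_J$) is non-degenerate as well.

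The main obstacle is the identity $M_J=(M_I)_J\oplus(M_I^\perp)_J$, which is not a purely formal consequence of the definitions: one must exploit simultaneously that $M_I$ and $M_I^\perp$ are $F$-stable (established in part (1)) and that the grading splits compatibly with the orthogonal decomposition (which itself relies on non-degeneracy of $(\cdot,\cdot)|_{M_I\times M_I}$, an instance of property (2) of the ambient HR-form). Once this identity is in hand, the inheritance of all four HR-form axioms by the restriction is immediate.
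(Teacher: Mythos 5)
Your proof is correct. Part (1) is exactly the paper's argument: $\bigoplus_{\mu\in I}M_\mu$ is $E$-stable because $I$ is closed, Lemma \ref{lemma-Estab} then gives $E$-stability of $M_I$, and adjunction transfers both stabilities to $M_I^\perp$. For part (2) the paper simply invokes Remark \ref{rem-decomp1} (which is stated without proof), whereas you supply the actual content of that remark: the orthogonal decomposition $M_J=(M_I)_J\oplus(M_I^\perp)_J$ for every closed $J$, from which non-degeneracy of the restricted form on $(M_I)_J$ and $(M_I^\perp)_J$ follows. Your verification of this identity is sound --- both summands are $F$-stable and sit inside $M_J$, and conversely their sum is $F$-stable and contains $(M_I)_\mu\oplus(M_I^\perp)_\mu=M_\mu$ for $\mu\in J$ (using that $M_I$ and hence $M_I^\perp$ are graded subspaces) --- so your write-up is, if anything, more complete than the paper's, which leaves this step implicit.
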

\begin{proof}  By definition $M_I=\lgl\bigoplus_{\mu\in I} M_\mu\rgl_F$ is $F$-stable. As $I$ is a closed subset, $\bigoplus_{\mu\in I} M_\mu$ is an $E$-stable subset of $M$. Hence Lemma \ref{lemma-Estab} implies that $M_I$ is $E$-stable as well.  As the $E_\alpha$'s are the adjoint of the $F_\alpha$'s, we deduce that $M_I^\perp$ is also $E$- and $F$-stable. Hence we proved (1). Claim (2) follows now from Remark \ref{rem-decomp1}. 
\end{proof} 

\begin{proposition} \label{prop-isotyp} Suppose that $\ch K\ne 2$. Let $(\cdot,\cdot)$ be an HR-form on $M$. 
Then there exists some $n>0$ and a family $\{M_i\}_{i=1,\dots,n}$ of graded  subspaces of $M$ that satisfies the following.
\begin{enumerate}
\item Each $M_i$ is $F$-stable and $F$-cyclic. 
\item $M=\bigoplus_{i=1}^nM_i$ as a $K$-vector space.
\item $M_i$ and $M_j$ are orthogonal with respect to $(\cdot,\cdot)$ if $i\ne j$.
\item  The restriction of $(\cdot,\cdot)$ to each $M_i$ is an HR-form again.
\end{enumerate}

\end{proposition}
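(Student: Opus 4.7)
The plan is to construct the decomposition by a Zorn's lemma argument that repeatedly splits off an $F$-cyclic summand generated by an $E$-annihilated non-isotropic vector, and then to derive finiteness of the number of summands from the generation hypothesis $(\ast)_K$.

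Let $\mathcal P$ denote the set of families $\{(M_i, m_i)\}_{i \in I}$ where each $M_i$ is an $F$-stable graded subspace of $M$, where $m_i \in (M_i)_{\lambda_i}$ is a weight vector generating $M_i = \lgl\{m_i\}\rgl_F$ with $(m_i, m_i) \ne 0$ and $E_\alpha(m_i) = 0$ for all $\alpha \in \Pi$, where the $M_i$'s are pairwise orthogonal, and where the natural map $\bigoplus_{i \in I} M_i \to M$ is injective. Partially ordered by inclusion, $\mathcal P$ admits a maximal element $\{(M_i, m_i)\}_{i \in I}$ by Zorn's lemma; set $N := \bigoplus_i M_i$. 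The calculation in the proof of Proposition~\ref{prop-univprop}, which uses only the HR-form on $M$ and the identities $E_\alpha(m_i) = 0$, produces for each $i$ a surjection $P(\lambda_i) \to M_i$ that descends to an isomorphism $\LDelta(\lambda_i) \xrightarrow{\sim} M_i$ identifying the restricted form on $M_i$ with $(m_i, m_i) \cdot (\cdot,\cdot)_{\LDelta(\lambda_i)}$. In particular the restriction of $(\cdot,\cdot)$ to each $M_i$ is a non-degenerate HR-form, so the form is non-degenerate on $N$. A weight-wise orthogonal-complement argument (using orthogonality and finite-dimensionality of weight spaces) gives the graded decomposition $M = N \oplus N^\perp$, and Lemma~\ref{lemma-Estab} together with the adjointness of $E_\alpha, F_\alpha$ ensure both summands are $F$- and $E$-stable.

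The core step is showing $N = M$. Suppose otherwise. Since $\supp M$ is bounded above by finitely many weights $\gamma_1, \dots, \gamma_l$ and each interval $\{\mu : \mu_0 \le \mu \le \gamma_j\}$ is a finite subset of $X$, every ascending chain in $\supp N^\perp \subset \supp M$ terminates, so $\supp N^\perp$ admits a maximal element $\lambda$. The form on the finite-dimensional space $(N^\perp)_\lambda$ is non-degenerate (the weight-space decomposition of $N^\perp$ remains orthogonal), so $\ch K \ne 2$ yields $m \in (N^\perp)_\lambda$ with $(m, m) \ne 0$. Maximality of $\lambda$ forces $E_\alpha(m) \in (N^\perp)_{\lambda + \alpha} = 0$ for every $\alpha \in \Pi$. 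Hence $M_{i_0} := \lgl\{m\}\rgl_F \subset N^\perp$ together with $m$ meets all the conditions defining $\mathcal P$ and strictly extends $\{(M_i, m_i)\}_{i \in I}$, contradicting maximality. Therefore $N = M$.

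To see $I$ is finite, take a finite $F$-generating set $T = \{t_1, \dots, t_r\}$ for $M$ furnished by $(\ast)_K$, and write each $t_j = \sum_{i \in I} t_j^{(i)}$ in the direct sum. Set $I' := \{i \in I : t_j^{(i)} \ne 0 \text{ for some } j\}$, a finite subset of $I$. For $i \notin I'$ every $F_{\ul\alpha}(t_j)$ has zero $M_i$-component, so the projection of $\lgl T\rgl_F = M$ onto $M_i$ vanishes, which is impossible since $M_i \ne 0$. Hence $I = I'$ is finite; relabelling gives the required family $\{M_1, \dots, M_n\}$. Properties (1), (2), (3) of the statement hold by construction, and (4) follows from the isomorphisms $M_i \cong \LDelta(\lambda_i)$ (alternatively from Remark~\ref{rem-decomp1}). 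The main obstacle is the Zorn step, specifically verifying that $\supp N^\perp$ admits a maximal element and that the universal-property calculation of Proposition~\ref{prop-univprop} transfers verbatim to the subspace $M_{i_0} \subset M$ using the ambient HR-form.
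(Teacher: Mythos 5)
Your overall architecture (split off cyclic pieces generated by $E$-annihilated non-isotropic weight vectors, then count them via $(\ast)_K$) is the right idea, but there is a circularity at the step you yourself flag as the main obstacle, and it is a genuine gap. You need the restriction of $(\cdot,\cdot)$ to each $M_i=\lgl\{m_i\}\rgl_F$ (hence to $N$) to be non-degenerate: without it the decomposition $M=N\oplus N^\perp$ fails ($N\cap N^\perp=\rad(N)$ need not vanish), the form on $(N^\perp)_\lambda$ need not be non-degenerate, and the injectivity of the extended family in the Zorn step is unproved. You justify this non-degeneracy by saying the surjection $\tilde f\colon P(\lambda_i)\to M_i$ ``descends to an isomorphism $\LDelta(\lambda_i)\xrightarrow{\sim}M_i$.'' But the only step in the proof of Proposition \ref{prop-univprop} that forces $\rad(\cdot,\cdot)_{P(\lambda_i)}\subseteq\ker\tilde f$ is precisely the non-degeneracy of the form on the \emph{target} of $\tilde f$; in that proposition the target is the whole $F$-cyclic module, where non-degeneracy is an axiom, whereas for you the target is the subspace $M_i\subsetneq M$, where it is exactly the claim at issue. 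What the identity $(\tilde f(x),\tilde f(y))=c_i(x,y)_{P(\lambda_i)}$ actually gives is $\rad(M_i)=\tilde f\bigl(\rad(\cdot,\cdot)_{P(\lambda_i)}\bigr)$, i.e.\ only a surjection $M_i\twoheadrightarrow\LDelta(\lambda_i)$ with kernel $\rad(M_i)$. Concretely, for $A_1$ and $m_i$ of weight $n\ge 0$ with $E(m_i)=0$, one computes $E^{n+1}F^{n+1}(m_i)=0$, so $F^{n+1}(m_i)$ always lies in $\rad(M_i)$; that it is actually zero in $M$ is equivalent to the semisimplicity you are trying to prove, and nothing in your argument rules it out for an $m_i$ sitting at a non-maximal weight.

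The paper's proof avoids this by ordering the steps differently: it takes $\lambda$ maximal in $\supp M$, uses axiom (2) of the HR-form for the closed set $I=\{\mu\mid\lambda\le\mu\}$ to split off $M_I$ with a form that is non-degenerate \emph{by hypothesis}, decomposes $M_I$ using an orthogonal basis of the entire weight space $M_\lambda$ (so the cyclic pieces exhaust $M_I$, and mutual orthogonality plus non-degeneracy on the total $M_I$ then yields non-degeneracy on each piece via Remark \ref{rem-decomp1}), and finally inducts on $M_I^\perp$. If you want to keep the Zorn formulation, you must build non-degeneracy of the form on each $M_i$ into the definition of $\mathcal P$ and, in the extension step, work with all of $(N^\perp)_\lambda$ at once (choosing a full orthogonal basis and invoking axiom (2) for $N^\perp$) rather than adjoining a single vector $m$; as written, the single-vector extension cannot certify that $\lgl\{m\}\rgl_F$ has non-degenerate form.
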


\begin{proof} Suppose that $\lambda\in X$ is a maximal weight of $M$. Set $I:=\{\mu\in X\mid\lambda\le\mu\}$. Then $I$ is closed. From Lemma \ref{lemma-orthdec} we obtain an orthogonal decomposition $M=M_I\oplus M_I^{\perp}$ into $E$- and $F$-stable subspaces such that the restriction of $(\cdot,\cdot)$ to either $M_I$ or $M_I^\perp$ is an HR-form again. Now $M_I$ is generated by $(M_I)_\lambda$, and each  weight $\mu$ of $M_I^{\perp}$ satisfies $\lambda\not\le\mu$. Using induction and property $(\ast)_K$ we see that it is enough to prove the statement in the case that $M$ is generated by $M_\lambda$, where $\lambda$ is a maximal weight of $M$.

As the bilinear form $(\cdot,\cdot)$ is non-degenerate on $M_\lambda$ and the characteristic of $K$ is $\ne 2$, there exists an orthogonal basis $m_1$, \dots, $m_n$ of $M_\lambda$ such that $(m_i,m_i)\ne 0$. Set $M_i:=\lgl \{m_i\}\rgl_F$. So $M_i$ is an $F$-cyclic subspace. Clearly, $M$ is generated by the $M_i$. The maximality of $\lambda$ implies that each $m_i$ is annihilated by all $E_\alpha$, hence Lemma \ref{lemma-Estab} implies that each $M_i$ is $E$-stable. We claim that $M_i$ and $M_j$ are orthogonal  for $i\ne j$. First, for any simple root path $\ul\alpha$ we have $(m_i,F_{\ul\alpha}(m_j))=(E_{\ul\alpha^r}(m_i),m_j)$. Now this is $=0$ if $\ul\alpha$ is not the empty path, 
 and for $\ul\alpha=\emptyset$ it is $=0$ as $m_i$ and $m_j$ are orthogonal. Hence $(m_i,M_j)=\{0\}$. For any simple root path $\ul\alpha$ we deduce $(F_{\ul\alpha}(m_i),M_j)=(m_i,E_{\ul\alpha}(M_j))=\{0\}$ as $M_j$ is $E$-stable. Hence $(M_i,M_j)=\{0\}$. Hence the $M_i$ are mutually orthogonal. It follows that their sum is direct, i.e. $M=\bigoplus_{i=1}^l M_i$. By Remark \ref{rem-decomp1} the restriction of $(\cdot,\cdot)$ to $M_i\times M_i$ is again an HR-form. 
\end{proof}

Proposition \ref{prop-isotyp} together with Proposition \ref{prop-univprop} implies the following.
\begin{proposition}\label{prop2-isotyp} Suppose that $\ch K\ne 2$. Suppose $M$ and $\{F_\alpha\}_{\alpha\in\Pi}$ satisfy  $(\ast)_K$  and suppose that these data admit an HR-form. Then there is some $n>0$ and   $\lambda_i\in X$ for $i\in \{1,\dots,n\}$ and an $X$-graded $K$-linear isomorphism $M\xrightarrow{\sim} \bigoplus_{i=1}^n\LDelta(\lambda_i)$ that commutes with the $F$-homomorphisms on both sides.
\end{proposition}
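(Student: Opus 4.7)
The plan is to combine the two preceding propositions directly, as the hint in the text before the statement already suggests. First I would invoke Proposition \ref{prop-isotyp} applied to $M$ equipped with its HR-form. This produces an integer $n>0$ and a decomposition $M=\bigoplus_{i=1}^n M_i$ into $X$-graded, $F$-stable subspaces $M_i$, pairwise orthogonal with respect to $(\cdot,\cdot)$, such that each $M_i$ is $F$-cyclic and the restriction of $(\cdot,\cdot)$ to $M_i\times M_i$ is again an HR-form.

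Next, for each $i$, let $\lambda_i\in X$ denote the highest weight of the $F$-cyclic space $M_i$, characterized by the fact that $(M_i)_{\lambda_i}$ is one-dimensional and $(M_i)_\mu=\{0\}$ unless $\mu\le\lambda_i$. Since the restriction of $(\cdot,\cdot)$ to $M_i$ is an HR-form and $M_i$ is $F$-cyclic with highest weight $\lambda_i$, Proposition \ref{prop-univprop} supplies an $X$-graded $K$-linear isomorphism $f_i\colon \LDelta(\lambda_i)\xrightarrow{\sim} M_i$ that intertwines the $F$-operators (and additionally is compatible with the HR-forms up to a non-zero scalar, though we do not need this latter statement here).

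Finally I would assemble the maps $f_i$ into the single direct sum map
\[
f=\bigoplus_{i=1}^n f_i\colon \bigoplus_{i=1}^n \LDelta(\lambda_i)\longrightarrow \bigoplus_{i=1}^n M_i = M.
\]
Each $f_i$ is an $X$-graded $K$-linear isomorphism commuting with the action of each $F_\alpha$, so the direct sum $f$ is an $X$-graded $K$-linear isomorphism commuting with the $F_\alpha$ on both sides, which is exactly the statement of the proposition.

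Since both inputs (Propositions \ref{prop-isotyp} and \ref{prop-univprop}) have already been established, there is essentially no obstacle here; the only point to verify carefully is that the highest weight $\lambda_i$ extracted from $M_i$ is well-defined (which follows from the $F$-cyclicity established in Proposition \ref{prop-isotyp}) so that Proposition \ref{prop-univprop} is applicable.
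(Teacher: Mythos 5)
Your proposal is correct and matches the paper exactly: the paper's entire proof is the single remark that Proposition \ref{prop-isotyp} together with Proposition \ref{prop-univprop} implies the statement, which is precisely the combination you carry out (decompose into orthogonal $F$-cyclic summands carrying HR-forms, identify each with some $\LDelta(\lambda_i)$ via the universal property, and take the direct sum of the resulting isomorphisms). Your added care about the well-definedness of the highest weights $\lambda_i$ and the applicability of $(\ast)_K$ to each $F$-cyclic summand is a harmless elaboration of details the paper leaves implicit.
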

Our next step is to show that each $V(\lambda)$ comes from an irreducible highest weight module for the Lie algebra associated with $R$ via forgetting structure. 
\section{The connection to representation theory}\label{sec-repthK}
We fix the data $R$, $\Pi$, and $K$ from Section \ref{sec-Xgradsp}.
Let $(\fg,\fh)$ be the split semisimple Lie algebra over $K$ with root system $R$. Recall that $\fg$ can be constructed as the $K$-Lie algebra with generators $\{e_\alpha, f_\alpha, h_\alpha \mid \alpha\in \Pi\}$ and the following relations. 
\begin{align*}
[h_\alpha,h_\beta]&=0, &&&&\\
[h_\alpha, e_\beta]&=\lgl\beta,\alpha^\vee\rangle e_\beta, & [h_\alpha, f_\beta]&=-\lgl\beta,\alpha^\vee\rangle f_\beta, \\
[e_\alpha,f_\alpha]&=h_\alpha,& [e_\alpha,f_\beta]&=0   \text{ if $\alpha\ne\beta$},&&\\
(\ad\, e_\alpha)^{-\lgl\alpha,\beta^\vee\rgl+1}(e_\beta)&=0,&  (\ad\, f_\alpha)^{-\lgl\alpha,\beta^\vee\rgl+1}(f_\beta)&=0  \text{ if $\alpha\ne\beta$}.&&
\end{align*}
The  $h_\alpha$ are a basis of the Cartan subalgebra $\fh$. We denote by $\fn^-$ the subalgebra generated by all $f_\alpha$, $\alpha\in\Pi$, and by $\fn^+$ the subalgebra generated by all $e_\alpha$, $\alpha\in\Pi$.  Then $\fg=\fn^-\oplus\fh\oplus\fn^+$.   We set $\fb^-=\fn^-\oplus\fh$ and $\fb^+=\fn^+\oplus\fh$. We denote by $U(\fa)$ the universal enveloping algebra of a Lie algebra $\fa$. We denote by $\iota\colon X\to\fhd$ the map that sends $\mu$ to the linear form on $\fh$ given by $h_\alpha\mapsto \lgl\mu,\alpha^\vee\rgl$. Note that $\iota$ is injective if $\ch K=0$, and has kernel $pX$ if $\ch K=p>0$.

\subsection{$X$-graded representations}\label{subsec-Xgradrep}
A convenient framework to deal with arbitrary characteristics is the following.
\begin{definition} \label{def-Xgradrepg} An {\em $X$-graded representation of $\fg$} is an $X$-graded $K$-vector space $M=\bigoplus_{\mu\in X} M_\mu$ that carries the structure of a representation of $\fg$ such that the following are satisfied.
\begin{enumerate}
\item For $\alpha\in\Pi$, the action maps of  $e_\alpha$ and $f_\alpha$ are homogeneous of degree $+\alpha$ and $-\alpha$, resp. Hence $h_\alpha$ acts  homogeneously of degree $0$, and we assume that it acts on $M_\mu$ via the character $\iota(\mu)$. 
\item As a $U(\fb^-)$-module, $M$ is finitely generated. 
\end{enumerate}
We denote by $\CC$ the category that contains all  $X$-graded representations as objects, and has as  morphisms those $\fg$-module homomorphisms that respect the grading.
\end{definition}
\begin{remarks}
\begin{enumerate}
\item If $M$ is an object in $\CC$, then each weight space $M_\mu$ is finite dimensional.
\item 
Suppose that the  characteristic of $K$ is $0$. Then an $X$-graded representation is nothing else but a weight module with integral weights  that is finitely generated over $U(\fb^-)$. 
\item Suppose that the characteristic of $K$ is $p>0$. Then the $X$-grading is a datum additional to the $\fg$-module structure, as $\iota$ is not injective.
\end{enumerate}
\end{remarks}
Let $\lambda\in X$. 
 An object $M$ of $\CC$ is called a {\em highest weight object with highest weight $\lambda$}, if there exists a non-zero vector $v\in M_\lambda$ such that $e_\alpha.v=0$ for all $\alpha\in\Pi$ and $M=U(\fg).v$. In this case, $M_\lambda$ is one-dimensional, $M=U(\fn^-).v$ and $M_\mu\ne \{0\}$ implies $\mu\le\lambda$.
 
 The following is proven using standard arguments.
 
 \begin{lemma} For each $\lambda\in X$ there exists an up to isomorphism unique irreducible object $L(\lambda)$ in $\CC$ with highest weight $\lambda$, and $\{L(\lambda)\}_{\lambda\in X}$ is a full set of representatives of the irreducible objects in $\CC$. 
 \end{lemma}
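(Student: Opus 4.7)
The plan is to follow the standard BGG-style construction of irreducible highest weight modules, with care for the $X$-grading, which in positive characteristic is a genuine refinement of the $\fh$-weight grading (since $\iota\colon X\to\fhd$ has kernel $pX$).

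First I would construct, for each $\lambda\in X$, the Verma object $M(\lambda):=U(\fg)\otimes_{U(\fb^+)}K_\lambda$, where $K_\lambda$ is the one-dimensional $\fb^+$-module on which $\fn^+$ acts by zero and $\fh$ acts via $\iota(\lambda)$. Via PBW, $M(\lambda)\cong U(\fn^-)$ as $U(\fn^-)$-modules, and the root lattice grading on $U(\fn^-)$ (shifted by $\lambda$) equips $M(\lambda)$ with an $X$-grading supported in weights $\mu\le\lambda$; in particular $M(\lambda)_\lambda$ is one-dimensional and generated by $v_\lambda:=1\otimes 1$. One checks routinely that $M(\lambda)$ lies in $\CC$ (the homogeneity conditions of Definition \ref{def-Xgradrepg} hold by construction, and finite generation over $U(\fb^-)$ is witnessed by $v_\lambda$) and is a highest weight object with highest weight $\lambda$. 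Next, every proper graded subobject $N\subset M(\lambda)$ must have $N_\lambda=0$, since $M(\lambda)_\lambda=Kv_\lambda$ cyclically generates $M(\lambda)$; hence the sum $N(\lambda)$ of all such subobjects remains proper and is the unique maximal graded proper subobject. Set $L(\lambda):=M(\lambda)/N(\lambda)$; it is an irreducible highest weight object with highest weight $\lambda$.

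For uniqueness and exhaustiveness: given any highest weight object $M$ in $\CC$ with highest weight $\lambda$ and highest weight vector $v$, the assignment $v_\lambda\mapsto v$ extends to a graded surjection $M(\lambda)\twoheadrightarrow M$ (the defining relations $e_\alpha.v_\lambda=0$ and $h_\alpha.v_\lambda=\iota(\lambda)(h_\alpha)v_\lambda$ are verified on $v$); if $M$ is irreducible, the kernel must equal $N(\lambda)$, so $M\cong L(\lambda)$. To see that every irreducible $M$ in $\CC$ arises this way, use finite generation over $U(\fb^-)$: picking homogeneous generators $m_1,\dots,m_k$ of weights $\mu_1,\dots,\mu_k$ bounds the support of $M$ above by $\{\mu_1,\dots,\mu_k\}$, so it has a maximal element $\lambda$. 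Any nonzero $v\in M_\lambda$ then satisfies $e_\alpha.v\in M_{\lambda+\alpha}=0$, so $U(\fg).v$ is a nonzero graded subobject (graded because the generators of $\fg$ act homogeneously) and equals $M$ by irreducibility. Thus $M$ is a highest weight object with highest weight $\lambda$, and $M\cong L(\lambda)$. The $L(\lambda)$ are pairwise non-isomorphic because $\lambda$ is recovered as the unique maximal weight of the support.

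The main obstacle is bookkeeping rather than substance: one must consistently work with $X$-graded subobjects and morphisms (rather than merely $\fh$-weight subspaces), so that the characteristic-$p$ case, in which distinct elements of $X$ can induce the same character on $\fh$, is handled correctly. In particular, the grading on $M(\lambda)$ and the "maximal weight" argument for an arbitrary irreducible $M$ both take place at the level of the $X$-grading that is part of the datum of an object of $\CC$, not at the coarser level of $\fh$-eigenspaces.
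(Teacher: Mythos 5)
The paper omits the proof, stating only that it follows by ``standard arguments,'' and your Verma-module construction with a unique maximal graded proper subobject, the universal property for uniqueness, and the maximal-weight argument for exhaustiveness is exactly the standard argument intended. Your attention to working with the $X$-grading as an extra datum (rather than $\fh$-eigenspaces) in characteristic $p$ is the one genuine subtlety here, and you handle it correctly.
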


\subsection{Contravariant duals and contravariant forms}\label{subsec-conform}
From the representation of $\fg$ by generators and relations it follows that there is an antiautomorphism $\tau$ on $\fg$ that interchanges $e_\alpha$ and $f_\alpha$ for all $\alpha\in\Pi$ and restricts to the identity on $\fh$. Let $M$ be an $X$-graded representation. We denote by $dM$ its {\em contravariant dual}. Recall that $dM=\bigoplus_{\mu\in X}M_\mu^\ast\subset M^\ast$ as a vector space, and the action of $\fg$ is given by $(x.f)(m)=f(\tau(x).m)$ for all $f\in dM$, $x\in\fg$ and $m\in M$. Then $dM$ is again an $X$-graded representation if we set  $(dM)_\lambda:=M_\lambda^\ast$.  Note that a homomorphism $M\to dM$ in $\CC$  is the same as a bilinear form $(\cdot,\cdot)\colon M\times M\to K$ that has the following properties:
\begin{itemize}
\item $(x.m,n)=(m,\tau(x).n)$ for all $x\in\fg$, $m,n\in M$,
\item $(m,n)=0$ if $m\in M_\lambda$, $n\in M_\nu$ and $\lambda\ne\mu$.
\end{itemize}
A bilinear form on $M$ with these properties is called a {\em contravariant form} on $M$. 

\begin{lemma}\label{lemma-repconform} Let $\lambda\in X$. 
\begin{enumerate} 
\item  We have $dL(\lambda)\cong L(\lambda)$.
\item There exists a symmetric and non-degenerate contravariant form on $L(\lambda)$.
\end{enumerate}
\end{lemma}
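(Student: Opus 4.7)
The plan is to exploit the rigidity of irreducibles in $\CC$ (Schur's lemma) together with the fact that the contravariant dual functor $d$ is an involutive contravariant self-equivalence on $\CC$: since each weight space of an object of $\CC$ is finite-dimensional, the canonical map $M \to ddM$ is an isomorphism. As a preliminary step I would verify the Schur-type statement $\End_\CC L(\lambda) = K$. Any graded endomorphism $f$ of $L(\lambda)$ restricts to a scalar $c \cdot \id$ on the one-dimensional space $L(\lambda)_\lambda$; since $L(\lambda)_\lambda$ generates $L(\lambda)$ as a $\fg$-module (by the highest weight property), $f - c \cdot \id$ vanishes on all of $L(\lambda)$, so $f = c \cdot \id$.

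For (1), I would exhibit $dL(\lambda)$ as an irreducible highest weight object with highest weight $\lambda$. Irreducibility is immediate from $d$ being a contravariant self-equivalence. To see it is a highest weight object, take a non-zero $v^\ast \in (dL(\lambda))_\lambda = L(\lambda)_\lambda^\ast$; then for a weight vector $m$ of $L(\lambda)$, $(e_\alpha.v^\ast)(m) = v^\ast(f_\alpha.m)$ vanishes because $f_\alpha.m$ has weight different from $\lambda$. Since $dL(\lambda)$ is irreducible, $U(\fg).v^\ast = dL(\lambda)$, and uniqueness of $L(\lambda)$ yields $dL(\lambda) \cong L(\lambda)$.

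For (2), claim (1) combined with the Schur statement gives $\Hom_\CC(L(\lambda), dL(\lambda)) \cong K$, so there is a non-zero contravariant form $(\cdot,\cdot)$ on $L(\lambda)$, unique up to a scalar. Its radical is a graded $\fg$-submodule (graded by the orthogonality of distinct weight spaces, and $\fg$-stable by the contravariance relation $(x.m,n) = (m,\tau(x).n)$); hence it vanishes, giving non-degeneracy. In particular $(v,v) \ne 0$ for a non-zero $v \in L(\lambda)_\lambda$. For symmetry, the swapped form $(m,n)' := (n,m)$ is again contravariant (using $\tau^2 = \id$), so by uniqueness equals $c \cdot (\cdot,\cdot)$ for some $c \in K$; applying the swap twice gives $c^2 = 1$, so $c = \pm 1$. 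If $c = -1$ then $2(v,v) = 0$, which forces $(v,v) = 0$ since $\ch K \ne 2$, contradicting non-degeneracy. Hence $c = 1$ and the form is symmetric.

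The main technical point to be careful about is part (1): it is essential that $d$ genuinely gives a contravariant equivalence on $\CC$, which rests on the built-in finite-dimensionality of weight spaces. The symmetry step in (2) is the only place where the assumption $\ch K \ne 2$ is invoked.
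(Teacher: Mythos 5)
Your proof is correct and follows essentially the same route as the paper: identify $dL(\lambda)\cong L(\lambda)$ using irreducibility of the contravariant dual and identification of its highest weight, then read off the isomorphism as a non-degenerate contravariant form. The only divergence is the symmetry step: the paper observes that a contravariant form on the $U(\fb^-)$-cyclic module $L(\lambda)$ is already determined by its value on a highest weight vector, so the form coincides with its transpose in \emph{any} characteristic, whereas your $c=\pm 1$ argument invokes $\ch K\ne 2$ --- harmless here since that is a standing hypothesis, but strictly less general.
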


\begin{proof} Note that $dL(\lambda)$ is an irreducible object in $\CC$ as well. As the dimensions of the weight spaces of $L(\lambda)$ and $dL(\lambda)$ coincide, $dL(\lambda)$ has $\lambda$ as its highest weight, hence (1). An isomorphism $L(\lambda)\to dL(\lambda)$ is nothing but 
 a  non-degenerate contravariant form $(\cdot,\cdot)$ on $L(\lambda)$. As $L(\lambda)$ is cyclic as an $U(\fb^-)$-module, the weight decomposition is orthogonal and the highest weight space is one-dimensional, such a form is already determined by its value on a highest weight generator. It follows that it is symmetric. 
 \end{proof}

\subsection{HR-forms and contravariant forms} 
 
Suppose that  $M=\bigoplus_{\mu\in X} M_\mu$, $\{F_\alpha\}_{\alpha\in\Pi}$ satisfy  $(\ast)_K$ .
\begin{theorem} \label{thm-mainK} The following are equivalent:
\begin{enumerate}
\item $M$ admits an HR-form.
\item There exists a structure of an $X$-graded representation of $\fg$ on $M=\bigoplus_{\mu\in X} M_\mu$ such that 
\begin{enumerate}
\item  the action map of $f_\alpha$ is given by $\mbf_\alpha$ for each simple root $\alpha$,
\item  $M$ is semisimple, i.e. isomorphic to a (finite) direct sum of copies of various $L(\lambda)$'s.
\end{enumerate}
\end{enumerate}
\end{theorem}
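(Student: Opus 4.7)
The plan is to prove both directions by reducing to the standard objects $\LDelta(\lambda)$ and exploiting their universal property (Proposition \ref{prop-univprop}), rather than attempting to verify Serre relations directly on $M$.

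For the implication (2) $\Rightarrow$ (1), I would assume $M\cong\bigoplus_{i=1}^n L(\lambda_i)$ as $X$-graded $\fg$-modules with $F_\alpha$ acting as $f_\alpha$, and define $(\cdot,\cdot)$ on $M$ as the orthogonal sum of the symmetric non-degenerate contravariant forms on the simple summands $L(\lambda_i)$ provided by Lemma \ref{lemma-repconform}. Symmetry, orthogonality of distinct weight spaces, and the commutation relations of Definition \ref{def-formK}(4) are immediate from contravariance, since $\tau(f_\alpha)=e_\alpha$ makes $E_\alpha$ equal to the action of $e_\alpha$, and the required brackets hold in $\fg$. The only point needing care is the non-degeneracy on $M_I\times M_I$ for closed $I$. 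Here I observe that since $I$ is upwardly closed and $\lambda_i$ is the highest weight of $L(\lambda_i)$, one has either $\lambda_i\in I$, in which case $L(\lambda_i)_I=L(\lambda_i)$ by $F$-cyclicity, or $\lambda_i\notin I$, in which case no weight of $L(\lambda_i)$ lies in $I$ and $L(\lambda_i)_I=\{0\}$. Either way the restriction to $L(\lambda_i)_I\times L(\lambda_i)_I$ is non-degenerate, and the orthogonal-sum decomposition $M_I=\bigoplus_{\lambda_i\in I} L(\lambda_i)$ transfers non-degeneracy to $M_I$.

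For the implication (1) $\Rightarrow$ (2), I would start by applying Proposition \ref{prop2-isotyp} to obtain an $X$-graded isomorphism $M\cong \bigoplus_{i=1}^n \LDelta(\lambda_i)$ commuting with the $F$-maps. It then suffices to equip each $\LDelta(\lambda)$ with an $X$-graded $\fg$-structure making it isomorphic to $L(\lambda)$ in $\CC$, with the action of $f_\alpha$ given by $F_\alpha$. The key step is to apply Proposition \ref{prop-univprop} to $L(\lambda)$ itself, viewed as $F$-cyclic with highest weight $\lambda$ via $L(\lambda)=U(\fn^-)v_\lambda$ and equipped with $F_\alpha:=f_\alpha$ and the contravariant form of Lemma \ref{lemma-repconform}. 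To invoke Proposition \ref{prop-univprop} I must verify that this contravariant form is an HR-form on $L(\lambda)$: symmetry, orthogonality, and the commutation relations (4) hold exactly as in the previous paragraph, while non-degeneracy on the subspaces $L(\lambda)_I$ follows automatically from Lemma \ref{lemma-HRform}, since $L(\lambda)$ is $F$-cyclic. Proposition \ref{prop-univprop} then yields an $X$-graded $K$-linear isomorphism $\phi\colon \LDelta(\lambda)\xrightarrow{\sim} L(\lambda)$ intertwining the $F_\alpha$ on both sides. Transporting the $\fg$-action from $L(\lambda)$ through $\phi$ produces the desired $\fg$-structure on $\LDelta(\lambda)$: by construction $\phi$ is $X$-graded, so $h_\alpha$ acts on $\LDelta(\lambda)_\mu$ by $\langle\mu,\alpha^\vee\rangle$; and $\phi$ intertwines $F_\alpha$ with $f_\alpha$, so $f_\alpha$ acts as $F_\alpha$ as required. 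Doing this summand-wise finishes the proof.

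The conceptual obstacle in (1) $\Rightarrow$ (2) is that the Serre relations are not visible in the HR-form axioms, so one cannot obviously define a $\fg$-action on $\LDelta(\lambda)$ by writing down the generators and checking relations. My approach sidesteps this entirely: the universal property of $\LDelta(\lambda)$ forces it to coincide (as an $F$-cyclic object with HR-form) with $L(\lambda)$, which already carries a genuine $\fg$-structure, and I simply transport this structure back. In other words, the Serre relations, which \emph{must} hold on $\LDelta(\lambda)$ by the remark following Theorem 1, become automatic consequences of the fact that some HR-form object with those invariants, namely $L(\lambda)$, exists representation-theoretically.
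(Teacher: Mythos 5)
Your proposal is correct and follows essentially the same route as the paper: both directions reduce to the simple modules $L(\lambda)$, using the contravariant form of Lemma \ref{lemma-repconform} together with Lemma \ref{lemma-HRform} for (2)$\Rightarrow$(1), and the chain Proposition \ref{prop2-isotyp} plus the universal property (Proposition \ref{prop-univprop}) applied to $L(\lambda)$, followed by transport of structure, for (1)$\Rightarrow$(2). Your closing observation that the Serre relations come for free from the existence of $L(\lambda)$ is exactly the mechanism the paper relies on.
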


\begin{proof} We first show that (2) implies (1). Let  $\lambda\in X$. For $\alpha\in\Pi$ we let $F_\alpha\colon L(\lambda)\to L(\lambda)$ be the action map of $f_\alpha\in\fg$. Hence we obtain an $X$-graded space with operators $\{F_\alpha\}$. As $L(\lambda)$ is cyclic as an $U(\fn^-)$-module and as $U(\fn^-)$ is generated by the $f_\alpha$ with $\alpha\in\Pi$, the property $(\ast)_K$ is satisfied. 

By Lemma \ref{lemma-repconform} there exists a non-degenerate and symmetric contravariant form  $(\cdot,\cdot)_{L(\lambda)}$  on $L(\lambda)$. We show that this is an HR-form. As we deal with an $F$-cyclic object of highest weight $\lambda$ and the form is non-degenerate, Lemma \ref{lemma-HRform} implies that it is enough to check properties (1), (3) and (4) in Definition \ref{def-formK}. We have already seen that the form is symmetric. The weight space decomposition is an orthogonal decomposition for any contravariant form. Moreover, for any $\alpha\in\Pi$ the adjoint map $E_\alpha$ to $F_\alpha$  is the action map of  $e_\alpha$. So the commutation relations in Definition \ref{def-formK} follow from the commutation relations between the $e_\alpha$'s and the $f_\beta$'s. So $(\cdot,\cdot)_{L(\lambda)}$ is an HR-form. Now $M$ is isomorphic to a direct sum of copies of various $L(\lambda)$'s, by definition. By the above we obtain an HR-form on each direct summand. By taking the orthogonal direct sum we arrive at an HR-form on $M$.  Hence (2) implies (1).

Now we show that (1) implies (2). By the above we can consider $L(\lambda)$ as an $X$-graded space with $F$-operators. Then $L(\lambda)$ is $F$-cyclic with highest weight $\lambda$, so we obtain an isomorphism 
$\LDelta(\lambda)\cong L(\lambda)$ that commutes with the $F$-operators from Proposition \ref{prop-univprop}. In particular, via transport of structure we obtain the structure of an $X$-graded $\fg$-representation  on $\LDelta(\lambda)$ that makes it into an irreducible representation of  highest weight $\lambda$. 
Now, if $M$ admits an HR-form, then it is isomorphic to a direct sum of copies of various $(\LDelta(\lambda),\{F_{\alpha}\}_{\alpha\in\Pi})$'s by Proposition \ref{prop2-isotyp}. Using the $\fg$-module structure obtained above on each direct summand we see that there indeed exists a $\fg$-module structure on $M$ of the required sort. Hence  (1) implies (2).
\end{proof}

In the next sections we want to generalize the theorem above to a situation in which we replace $K$ by the ring  of $p$-adic numbers. 

\section{Operators in root directions on $X$-graded $\DZ_p$-modules}
Let $R$, $\Pi$ and $X$ be as in Section \ref{sec-Xgradsp}. Let $p$ be any prime number (later we need the restriction $p\ne 2$). 
Suppose now that $M=\bigoplus_{\mu\in X}M_\mu$ is an $X$-graded  $\DZ_p$-module. Suppose that we are given, for any $\alpha\in\Pi$, a $\DZ_p$-linear endomorphism $\mbf_\alpha\colon M\to M$ that is homogeneous of degree $-\alpha$. We will make two general assumptions on these data. The first  is the following.

\begin{itemize}
\item[$(\ast)_{p1}$] $M$ is a free $\DZ_p$-module of finite rank and for any $n\ge 0$ and any $\alpha\in\Pi$, $F_\alpha^n$ maps $M$ into $n!M$.
\end{itemize}
This implies that there is a well defined operator $F_\alpha^{(n)}:=F_\alpha^n/n!$ on $M$. We say that a subset $N$ of $M$ is {\em $F^{(\ast)}$-stable} if $F_\alpha^{(n)}(N)\subset N\cup\{0\}$ for all $n\ge 0$ and $\alpha\in\Pi$. 
 For an arbitrary subset $T$ of $M$ we denote by $\lgl T\rgl_{F^{(\ast)}}$ the smallest $F^{(\ast)}$-stable $\DZ_p$-submodule of $M$. For a closed subset $I$ of $X$ we set $M_I:=\lgl \bigoplus_{\mu\in I} M_\mu\rgl_{F^{(\ast)}}$. Our second assumption is the following.

\begin{enumerate}
\item[$(\ast)_{p2}$] For any closed subset $I$ of $X$, the inclusion $M_I\to M$ splits as an inclusion of $\DZ_p$-modules. 
\end{enumerate}
Note that the above implies that $M_I$  and the cokernel of the inclusion are  free $\DZ_p$-modules of finite rank.

Let $(\cdot,\cdot)\colon M\times M\to\DZ_p$ be a  $\DZ_p$-bilinear form on $M$. 
\begin{definition}\label{def-formp} We say that $(\cdot,\cdot)$ is a {\em $p$-adic HR-form}  if the following holds:
\begin{enumerate}
%
\item It is symmetric and non-degenerate.
\item For any closed subset $I$ of $X$ its restriction to $M_I\times M_I$ is faithful.
\item $M_\mu$ and $M_\nu$ are orthogonal  for $\mu\ne\nu$.
\item For $\alpha\in\Pi$ denote by $E_\alpha\colon M\to M$ the adjoint of $F_\alpha$ with respect to $(\cdot,\cdot)$. Then 
  $[\mbe_\alpha,\mbf_\alpha](v)=\langle\mu,\alpha^\vee\rangle v$ for all $\mu\in X$ and $v\in {M_\mu}$, and
$[\mbe_\alpha,\mbf_\beta]=0$ if $\alpha\ne\beta$.
\end{enumerate}
\end{definition}
Note that the restriction of $(\cdot,\cdot)$  to $M_I\times M_I$ is called faithful if the following holds. If $v\in M_I$ is such that $(v,w)=0$ for all $w\in M_I$, then $v=0$,  i.e. $(\cdot,\cdot)$ induces an {\em injective} homomorphism from $M_I$ into its $\DZ_p$-linear dual.

Denote by $\DQ_p$ the quotient field of $\DZ_p$. For a $\DZ_p$-module $N$ we define $N_{\DQ_p}:=N\otimes_{\DZ_p}\DQ_p$. If $M$ is a $p$-adic $X$-graded space with operators $F_\alpha$, then we can endow $M_{\DQ_p}$ with the induced $X$-grading and the induced operators $F_\alpha$ (that we denote by the same symbol). As $M$ is supposed to be free over $\DZ_p$ we will consider it as a $\DZ_p$-lattice inside $M_{\DQ_p}$. 

Now let us fix a $p$-adic HR-form $(\cdot,\cdot)$  on $M$. We denote by $(\cdot,\cdot)_{\DQ_p}$ the induced $\DQ_p$-bilinear form on $M_{\DQ_p}$.  It is non-degenerate. We denote by  $E_\alpha\colon M_{\DQ_p}\to M_{\DQ_p}$ the homomorphism induced by $E_\alpha\colon M\to M$. It is clearly the adjoint to $F_\alpha\colon M_{\DQ_p}\to M_{\DQ_p}$ with respect to $(\cdot,\cdot)_{\DQ_p}$.
\begin{lemma} \label{lemma-stabE} For any $\alpha\in\Pi$ and $n\ge 0$ we have $E_\alpha^n(M)\subset n! M$.
\end{lemma}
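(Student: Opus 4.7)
The plan is to combine the iterated adjointness relation with the non-degeneracy of the form on $M$. The whole argument amounts to transferring the divisibility property of $F_\alpha^n$ to $E_\alpha^n$ via the self-duality of $M$.

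First I would establish by a straightforward induction on $n$ the identity $(E_\alpha^n v, w) = (v, F_\alpha^n w)$ for all $v, w \in M$; the base case $n=1$ is simply the definition of the adjoint, and the inductive step reads
\[
(E_\alpha^{n+1} v, w) = (E_\alpha^n v, F_\alpha w) = (v, F_\alpha^n F_\alpha w) = (v, F_\alpha^{n+1} w).
\]
By assumption $(\ast)_{p1}$ we have $F_\alpha^n w \in n! M$, and since $(\cdot,\cdot)$ takes values in $\DZ_p$ on $M \times M$, this forces $(E_\alpha^n v, w) \in n!\DZ_p$ for every $w \in M$.

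The final step interprets this divisibility through the non-degenerate form. Condition~(1) of Definition~\ref{def-formp} should be read as saying that the pairing is \emph{unimodular}, i.e.\ that the natural map $M \to \Hom_{\DZ_p}(M,\DZ_p)$, $x \mapsto (x,\cdot)$, is an isomorphism of $\DZ_p$-modules; this reading is forced by the fact that $E_\alpha$ must be a well-defined endomorphism of $M$ (not merely of $M_{\DQ_p}$), and it is consistent with the weaker notion being spelled out separately as ``faithful'' in (2). Under this isomorphism, the conclusion $(E_\alpha^n v, w) \in n!\DZ_p$ for all $w \in M$ translates directly into $E_\alpha^n v \in n! M$, which is the statement of the lemma.

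The only conceptual obstacle is this interpretive point about non-degeneracy: if one weakens it to mere faithfulness (injectivity into the dual), the argument only yields $E_\alpha^n v \in n! M^\vee$, where $M^\vee \supset M$ is the dual lattice in $M_{\DQ_p}$, which is strictly weaker than the desired conclusion. The unimodular reading is what makes the whole $p$-adic framework of the paper self-consistent.
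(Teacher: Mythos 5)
Your proof is correct and is essentially identical to the paper's: the paper likewise iterates the adjointness to get $(E_\alpha^n(v),w)=(v,F_\alpha^n w)\in n!\,\DZ_p$ and then invokes the criterion that $v\in M_{\DQ_p}$ lies in $M$ if and only if $(v,w)_{\DQ_p}\in\DZ_p$ for all $w\in M$, which is exactly the unimodular reading of ``non-degenerate'' that you spell out. Your explicit remark that faithfulness alone would only place $E_\alpha^n v$ in the dual lattice is a correct and worthwhile clarification of a point the paper leaves implicit, but it does not change the argument.
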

\begin{proof}   Let $v$ be an element in $M_{\DQ_p}$. As $(\cdot,\cdot)$ is non-degenerate on $M$, we have $v\in M$ if and only if $(v,w)_{\DQ_p}\in\DZ_p$ for all $w\in M$. For $v\in M$, $\alpha\in\Pi$ and $n\ge 0$ we have $(E_\alpha^n(v),w)=(v,F_\alpha^n w)\in n!\DZ_p$ for all $w\in M$. We deduce  $E_\alpha^n(v)/n!\in M$ or $E_\alpha^n(v)\in n! M$. 
\end{proof}
We denote by $E_\alpha^{(n)}$ the linear map $E_\alpha^n/n!\colon M\to M$. 

\begin{lemma}\label{lemma-indform} The induced bilinear form $(\cdot,\cdot)_{\DQ_p}$ on $M_{\DQ_p}$ is an HR-form in the sense of Definition \ref{def-formK} for $K=\DQ_p$.
\end{lemma}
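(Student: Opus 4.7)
The plan is to verify that $M_{\DQ_p}$ together with the induced operators $F_\alpha$ satisfies condition $(\ast)_K$ for $K=\DQ_p$, and then to check the four properties of Definition \ref{def-formK} for $(\cdot,\cdot)_{\DQ_p}$. Since $M$ is free of finite rank over $\DZ_p$, its support in $X$ is finite and a $\DZ_p$-basis of $\bigoplus_{\mu\in X}M_\mu$ is a finite subset of $M$ whose $\DQ_p$-span equals $M_{\DQ_p}$; this witnesses $(\ast)_K$. Symmetry, orthogonality of distinct weight spaces, and the commutation relations between the $E_\alpha$'s and $F_\beta$'s transfer from $M$ to $M_{\DQ_p}$ by $\DQ_p$-linear extension, giving properties (1), (3) and (4).

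The crux is property (2): for every closed $I\subset X$ the restriction of $(\cdot,\cdot)_{\DQ_p}$ to $(M_{\DQ_p})_I\times(M_{\DQ_p})_I$ must be non-degenerate. By the conventions of Section \ref{sec-Xgradsp}, $(M_{\DQ_p})_I$ is the smallest $F$-stable $\DQ_p$-subspace of $M_{\DQ_p}$ containing $\bigoplus_{\mu\in I}(M_{\DQ_p})_\mu$, whereas $M_I\subset M$ is the smallest $F^{(\ast)}$-stable $\DZ_p$-submodule with the analogous property. The key observation is that the two notions of stability agree over $\DQ_p$: since every $n!$ is invertible in $\DQ_p$, each $F_\alpha^{(n)}=F_\alpha^n/n!$ lies in the $\DQ_p$-algebra generated by $F_\alpha$, so any $F$-stable $\DQ_p$-subspace is automatically $F^{(\ast)}$-stable, and the converse is immediate. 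From this one easily obtains the two inclusions: on the one hand, $(M_I)_{\DQ_p}$ is $F^{(\ast)}$-stable and contains the starting weight spaces, hence contains $(M_{\DQ_p})_I$; on the other hand, $(M_{\DQ_p})_I$ contains every $F_\alpha^{(n)}$-image of any $m\in\bigoplus_{\mu\in I}M_\mu$, hence contains the $\DZ_p$-module $M_I$, so it contains $(M_I)_{\DQ_p}$ after $\DQ_p$-extension. This yields the identification $(M_{\DQ_p})_I=(M_I)_{\DQ_p}$.

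To conclude, the faithfulness of $(\cdot,\cdot)$ on $M_I\times M_I$ means that the $\DZ_p$-linear map $M_I\to\Hom_{\DZ_p}(M_I,\DZ_p)$ induced by the form is injective. By $(\ast)_{p2}$, $M_I$ is a finitely generated free $\DZ_p$-module, so applying the exact functor $-\otimes_{\DZ_p}\DQ_p$ yields an injective $\DQ_p$-linear map $(M_I)_{\DQ_p}\to\Hom_{\DQ_p}((M_I)_{\DQ_p},\DQ_p)$. In view of the identification of the previous paragraph, this is precisely the non-degeneracy of the restriction of $(\cdot,\cdot)_{\DQ_p}$ to $(M_{\DQ_p})_I$. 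I anticipate no serious obstacle; the whole argument is an extension-of-scalars exercise, and the only conceptual step is the coincidence of $F$- and $F^{(\ast)}$-stability over $\DQ_p$.
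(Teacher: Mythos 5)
Your proof is correct and follows essentially the same route as the paper: properties (1), (3), (4) pass to $M_{\DQ_p}$ by linear extension, and property (2) is reduced to the identification $(M_{\DQ_p})_I=(M_I)_{\DQ_p}$ together with the fact that a faithful form on a finite-rank free $\DZ_p$-lattice becomes non-degenerate after extending scalars to $\DQ_p$. The only difference is that you spell out the coincidence of $F$- and $F^{(\ast)}$-stability over $\DQ_p$ and the flat base-change step, which the paper leaves implicit.
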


\begin{proof} Clearly properties (1), (3) and (4) of Definition \ref{def-formK} are satisfied. Let $I$ be a closed subset of $X$. Recall that we defined $(M_{\DQ_p})_I$ as the smallest $F$-stable $\DQ_p$-subspace in $M_{\DQ_p}$ that contains all $(M_{\DQ_p})_\mu$ with $\mu\in I$. Then $M_I\subset (M_{\DQ_p})_I$ and $M_I$ generates $(M_{\DQ_p})_I$ as a $\DQ_p$-vector space. As the weight spaces of $M_{\DQ_p}$ are mutually orthogonal and finite dimensional, and since the restriction of $(\cdot,\cdot)$ to $M_I$ is faithful, the restriction of $(\cdot,\cdot)_{\DQ_p}$ to $(M_{\DQ_p})_I\times (M_{\DQ_p})_I$ is non-degenerate. Hence property (2) in Definition \ref{def-formK} is also satisfied. 
\end{proof}

We say that a subset $N$ of $M$ is  {\em $E^{(\ast)}$-stable} if $E_\alpha^{(n)}(N)\subset N\cup\{0\}$ for all $\alpha\in\Pi$ and $n\ge 0$. 
\begin{lemma}\label{lemma-substab} Let $I$ be a closed subset of $X$. Then the following holds.
\begin{enumerate}
\item $M_I=M\cap (M_{\DQ_p})_I$.
\item $M_I$ is $F^{(\ast)}$- and $E^{(\ast)}$-stable.
\end{enumerate}
\end{lemma}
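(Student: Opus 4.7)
The plan is to prove (1) first, using the splitting assumption $(\ast)_{p2}$ to transfer information between $M_{\DQ_p}$ and $M$, and then to deduce (2) from (1) by combining the ``$E$-stability'' of $(M_{\DQ_p})_I$ (which comes for free from Lemma \ref{lemma-indform} and Lemma \ref{lemma-orthdec}) with Lemma \ref{lemma-stabE}.

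For (1), I first note that $(M_{\DQ_p})_I$ is automatically $F^{(\ast)}$-stable: over $\DQ_p$ the operator $F_\alpha^{(n)}=F_\alpha^n/n!$ is a $\DQ_p$-scalar multiple of $F_\alpha^n$, so any $F$-stable $\DQ_p$-subspace is $F^{(\ast)}$-stable. Consequently $M\cap (M_{\DQ_p})_I$ is an $F^{(\ast)}$-stable $\DZ_p$-submodule of $M$ that contains all $M_\mu$ with $\mu\in I$, which gives the inclusion $M_I\subset M\cap(M_{\DQ_p})_I$. For the reverse inclusion, I would use $(\ast)_{p2}$ to write $M=M_I\oplus N$ as $\DZ_p$-modules, hence $M_{\DQ_p}=(M_I)_{\DQ_p}\oplus N_{\DQ_p}$. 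The key subclaim is that $(M_I)_{\DQ_p}=(M_{\DQ_p})_I$: the inclusion ``$\subset$'' follows from the previous paragraph by extending scalars, and the inclusion ``$\supset$'' holds because $(M_I)_{\DQ_p}$ is an $F$-stable $\DQ_p$-subspace containing $(M_{\DQ_p})_\mu=(M_\mu)_{\DQ_p}$ for all $\mu\in I$. Given this identification, an element $m\in M\cap(M_{\DQ_p})_I=M\cap(M_I)_{\DQ_p}$ decomposed as $m=m_1+m_2$ with $m_1\in M_I$ and $m_2\in N$ forces $m_2\in(M_I)_{\DQ_p}\cap N_{\DQ_p}=0$, so $m=m_1\in M_I$.

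For (2), $F^{(\ast)}$-stability of $M_I$ is built into its definition. For $E^{(\ast)}$-stability, by Lemma \ref{lemma-indform} the induced form on $M_{\DQ_p}$ is an HR-form (in the sense of Definition \ref{def-formK}), so Lemma \ref{lemma-orthdec} tells us that $(M_{\DQ_p})_I$ is stable under each $E_\alpha$. As before, $E$-stability of a $\DQ_p$-subspace implies $E^{(\ast)}$-stability, so $E_\alpha^{(n)}$ preserves $(M_{\DQ_p})_I$ for all $\alpha\in\Pi$ and $n\geq 0$. On the other hand, Lemma \ref{lemma-stabE} guarantees that $E_\alpha^{(n)}$ preserves $M$. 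Hence for $m\in M_I\subset M\cap(M_{\DQ_p})_I$, the element $E_\alpha^{(n)}(m)$ lies in $M\cap(M_{\DQ_p})_I$, which by (1) equals $M_I$.

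The only step that requires genuine care is the identification $(M_I)_{\DQ_p}=(M_{\DQ_p})_I$ combined with the direct-summand assumption $(\ast)_{p2}$, since without the splitting one could only conclude the inclusion $M\cap (M_{\DQ_p})_I\subset$ the saturation of $M_I$ in $M$; everything else is a routine application of the adjointness between $F_\alpha$ and $E_\alpha$ and of the results already established over $\DQ_p$.
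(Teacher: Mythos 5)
Your proof is correct and follows essentially the same route as the paper: both arguments rest on the identification $(M_I)_{\DQ_p}=(M_{\DQ_p})_I$ together with the splitting assumption $(\ast)_{p2}$ for part (1), and then deduce (2) from $M_I=M\cap (M_{\DQ_p})_I$ using Lemma \ref{lemma-stabE} and the $E$-stability of $(M_{\DQ_p})_I$ from Lemma \ref{lemma-orthdec}. The only cosmetic difference is that the paper phrases the reverse inclusion in (1) as ``the cokernel of $M_I\subset M\cap(M_{\DQ_p})_I$ is torsion, but embeds in the free module $M/M_I$, hence vanishes,'' while you unwind the same fact via the explicit decomposition $M=M_I\oplus N$.
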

\begin{proof} Note that $M_I\subset (M_{\DQ_p})_I$ and $M_I$ generates $(M_{\DQ_p})_I$ as a $\DQ_p$-vector space. It follows that the cokernel of the inclusion $M_I\subset M\cap (M_{\DQ_p})_I$ is a torsion $\DZ_p$-module. But since we assume that the inclusion $M_I\subset M$ splits, this torsion module must vanish. Hence (1). Now (2) is a consequence of (1), as both $M$ and $(M_{\DQ_p})_I$ are $F^{(\ast)}$- and  $E^{(\ast)}$-stable (cf. Lemma \ref{lemma-orthdec}). 
\end{proof}

\section{The connection to representation theory over $\DZ_p$}

The algebraic structure the we utilize in this section is the $\DZ_p$-algebra of distributions (also called  the $\DZ_p$-hyperalgebra) associated with $R$. It is defined as follows. Let $\fg_\DQ$ be the semisimple Lie algebra over $\DQ$ associated with $R$ (see the introduction to Section \ref{sec-repthK}) and $\fh_{\DQ}$ its Cartan subalgebra. For any $x\in\fg_\DQ$ and $n\in\DN$ define
$$
x^{(n)}:=\frac{x^n}{n!}, \quad {x\choose n}:=\frac{x(x-1)\cdots(x-n+1)}{n!}\in U(\fg_{\DQ}). 
$$
Now fix a Chevalley system $\{X_\gamma\}_{\gamma\in R}$ of $(\fg_{\DQ},\fh_{\DQ})$ (see \cite[Chapter VIII, 2.4]{B}). Recall that this means that $X_\gamma$ is an element in $\fg_{\DQ}$ that has weight $\gamma$ with respect to the adjoint action of $\fh_{\DQ}$,  and  $[X_\gamma,X_{-\gamma}]=\gamma^\vee\in\fh_{\DQ}$ is the coroot of $\gamma$ for all $\gamma\in R$. Moreover, it means that the $\DQ$-linear automorphism $\tau\colon\fg_{\DQ}\to\fg_{\DQ}$ that maps $X_\gamma$ to $X_{-\gamma}$ and is the identity on $\fh$ is a Lie algebra-antiautomorphism (note that our sign convention differs slightly from \cite[Chapter VIII, 2.4]{B} and hence our $\tau$ is an {\em anti}automorphism).

\subsection{The algebra of distributions}
Denote by $\Dist_\DZ\subset U(\fg_\DQ)$ the unital  $\DZ$-subalgebra that is generated by $X_{\gamma}^{(n)}$ with $\gamma\in R$ and $n\ge 0$, and ${\alpha^\vee\choose n}$ with $\alpha\in\Pi$ and $n\ge 0$, and set $\Dist_{\DZ_p}:=\Dist_\DZ\otimes_\DZ \DZ_p$. 

\begin{definition} We denote by $\cD$ the full subcategory of the category of all $\Dist_{\DZ_p}$-modules that contains all objects $M$ that satisfy the following.
\begin{enumerate}
\item As a $\DZ_p$-module, $M$ is free of finite rank. 
\item For $\alpha\in\Pi$, the element $h_\alpha\in\fg_{\DQ}$ acts diagonalizably on $M$ with integral eigenvalues.
\end{enumerate}
\end{definition}
If $M$ is an object in $\cD$, then we denote by $M=\bigoplus_{\mu\in X}M_\mu$ its weight decomposition with respect to the action of the $h_{\alpha}$, $\alpha\in\Pi$. So $h_{\alpha}$ acts on $M_\mu$ by multiplication with $\lgl\mu,\alpha^\vee\rgl$. 

\begin{remark} Let $G_{\DZ_p}$ be the semisimple, connected and simply connected split algebraic group scheme over $\DZ_p$ with root system $R$. By \cite[Section II, 1.20]{J}, an object in $\cD$  is the same as a rational representation of $G_{\DZ_p}$ that is free of finite rank as a $\DZ_p$-module. A morphism in $\cD$ is then the same as a homomorphism of  $G_{\DZ_p}$-modules. 
\end{remark}

 The following   is proven in \cite[Chapter VIII, Section 12]{B}. For $\alpha\in\Pi$ we write $e_\alpha:=X_\alpha$ and $f_\alpha:=X_{-\alpha}$.

\begin{proposition}\label{prop-Distgens} $\Dist_{\DZ}$ is generated as a $\DZ$-algebra by $e_\alpha^{(n)}$ and $f_{\alpha}^{(n)}$ for $\alpha\in\Pi$ and $n\ge 0$, and $\Dist_{\DZ}^-:=\Dist_\DZ\cap U(\fn^-)$ is generated by the elements $f_{\alpha}^{(n)}$ for $\alpha\in\Pi$ and $n\ge0$. 
\end{proposition}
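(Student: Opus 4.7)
The plan is to derive the proposition from the standard triangular decomposition of $\Dist_\DZ$. The first step is to invoke Kostant's $\DZ$-form theorem: the multiplication map
$$\Dist_\DZ^{-}\otimes_\DZ\Dist_\DZ^{0}\otimes_\DZ\Dist_\DZ^{+}\longrightarrow\Dist_\DZ$$
is a $\DZ$-module isomorphism, where $\Dist_\DZ^{\pm}$ is the subalgebra spanned by ordered PBW-monomials in $X_{\pm\gamma}^{(n)}$ for $\gamma\in R^{+}$ and $n\ge 0$, while $\Dist_\DZ^{0}$ is spanned by ordered monomials in ${\alpha^\vee\choose n}$ for $\alpha\in\Pi$ and $n\ge 0$. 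Granting this, to prove both claims it suffices to establish three facts: (a) $\Dist_\DZ^{-}$ is generated as a $\DZ$-algebra by $\{f_\alpha^{(n)}\}_{\alpha\in\Pi,\,n\ge 0}$; (b) $\Dist_\DZ^{+}$ is generated by $\{e_\alpha^{(n)}\}_{\alpha\in\Pi,\,n\ge 0}$; (c) each ${\alpha^\vee\choose n}$ lies in the subalgebra generated by the divided powers of $e_\alpha$ and $f_\alpha$.

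For (a) I would induct on the height of $\gamma\in R^{+}$. When $\height(\gamma)=1$ there is nothing to prove. For $\height(\gamma)>1$, write $\gamma=\alpha+\beta$ with $\alpha\in\Pi$ and $\beta\in R^{+}$ of smaller height, so that $X_{-\gamma}=\pm[f_\alpha,X_{-\beta}]$ in the Chevalley basis. A Chevalley-type commutator formula among divided powers then expresses $X_{-\gamma}^{(n)}$ as an explicit $\DZ$-linear combination of ordered products of the $f_\alpha^{(i)}$ and $X_{-\beta}^{(j)}$, and by the inductive hypothesis each $X_{-\beta}^{(j)}$ already lies in the subalgebra generated by simple $f_\delta^{(m)}$'s. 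Statement (b) follows formally by applying the antiautomorphism $\tau$ introduced above, which interchanges $\Dist_\DZ^{-}$ and $\Dist_\DZ^{+}$ and sends $f_\alpha^{(n)}$ to $e_\alpha^{(n)}$.

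For (c) the crucial tool is the $\fsl_2$ Kostant identity inside $U(\fg_\DQ)$,
$$e_\alpha^{(n)}\,f_\alpha^{(n)}=\sum_{k=0}^{n} f_\alpha^{(n-k)}{h_\alpha-2n+2k\choose k}e_\alpha^{(n-k)},$$
whose $k=n$ summand is precisely ${h_\alpha\choose n}$, while every other summand is a product of a divided power of $f_\alpha$, a lower-order Cartan term, and a divided power of $e_\alpha$. An induction on $n$ then shows that every ${h_\alpha\choose n}$ lies in the subalgebra generated by the $e_\alpha^{(m)}$ and $f_\alpha^{(m)}$, and since the collection $\{{\alpha^\vee\choose n}\}_{\alpha\in\Pi,\,n\ge 0}$ generates $\Dist_\DZ^{0}$ by definition, combining this with (a) and (b) and the triangular decomposition completes the argument.

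The main obstacle is the integrality and explicit shape of the divided-power commutator formula invoked in step (a): one has to write $X_{-\gamma}^{(n)}$ as a genuine $\DZ$-combination of PBW-monomials in the simple $f_\alpha^{(i)}$'s, not merely modulo lower-order terms, and check that the coefficients produced by formal manipulation inside $U(\fg_\DQ)$ are integers rather than arbitrary rationals. This is the heart of Chevalley's integrality theorem and is what occupies Bourbaki's treatment in Ch.\ VIII, \S 12; everything else in the argument is formal bookkeeping with the triangular decomposition.
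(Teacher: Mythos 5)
The paper offers no argument for this proposition at all --- it is quoted from Bourbaki \cite[Ch.~VIII, \S 12]{B} --- so the only comparison available is with the standard Kostant--Bourbaki proof, which is indeed what you are sketching. Your step (c) is correct: the identity $e_\alpha^{(n)}f_\alpha^{(n)}=\sum_{k=0}^{n}f_\alpha^{(n-k)}\binom{h_\alpha-2n+2k}{k}e_\alpha^{(n-k)}$ holds, the $k<n$ terms involve only $\binom{h_\alpha}{j}$ with $j<n$ (after expanding the shifted binomials integrally), and induction extracts $\binom{h_\alpha}{n}$. Step (b) via $\tau$ and the final assembly via the triangular decomposition are also fine.

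The genuine gap is in step (a). For a Chevalley system one has $[X_{-\alpha},X_{-\beta}]=N_{-\alpha,-\beta}\,X_{-\alpha-\beta}$ with $N_{-\alpha,-\beta}=\pm(p+1)$, where $p$ is the largest integer with $\beta-p\alpha\in R$; this equals $\pm1$ only when $p=0$. In non-simply-laced types $p$ can be $1$, $2$ or $3$: for instance in $B_2$, the root $\gamma=\alpha_1+2\alpha_2$ admits only the decomposition $\alpha=\alpha_2$, $\beta=\alpha_1+\alpha_2$, and there $N=\pm2$. So your identity $X_{-\gamma}=\pm[f_\alpha,X_{-\beta}]$ is false in general, and the divided-power commutator expansion yields $(p+1)^nX_{-\gamma}^{(n)}$ as a $\DZ$-combination of monomials in the $f_\alpha^{(i)}$ and $X_{-\beta}^{(j)}$, not $X_{-\gamma}^{(n)}$ itself. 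This is not an integrality issue that more careful bookkeeping can fix --- the obstruction is a non-unit leading coefficient, not a denominator --- so the height induction works verbatim only in types $A$, $D$, $E$. The standard repair, and the one underlying Bourbaki's treatment, is Weyl-group conjugation: each $\theta_\alpha:=\exp(\ad\,e_\alpha)\exp(-\ad\,f_\alpha)\exp(\ad\,e_\alpha)$ is an automorphism of $\Dist_\DZ$ carrying $X_\gamma^{(n)}$ to $\pm X_{s_\alpha(\gamma)}^{(n)}$, and it preserves the subalgebra $U'$ generated by all the simple divided powers because $\exp(x)u\exp(-x)=\sum_{k\ge0}\sum_{i+j=k}(-1)^{j}x^{(i)}ux^{(j)}$ is a finite sum inside the subalgebra generated by $u$ and the $x^{(i)}$. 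Since every root is Weyl-conjugate to a simple root, this places every $X_\gamma^{(n)}$ in $U'$, proving the first claim; the second, sharper claim that $\Dist_\DZ^-$ is generated by the $f_\alpha^{(n)}$ alone then needs one more step (a triangular decomposition of $U'$ itself plus a weight argument), because the conjugation device uses both the $e$'s and the $f$'s.
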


\subsection{Contravariant forms} 
Recall the antiautomorphism $\tau\colon \fg_{\DQ}\to \fg_{\DQ}$ from above. It induces an antiautomorphism on $\Dist_{\DZ_p}$ that we denote by the same symbol. 
For an object $M$ in $\cD$ we denote by $dM$ its {\em contravariant dual}. As a $\DZ_p$-module, $(dM)_\lambda=(M_\lambda)^\ast=\Hom_{\DZ_p}(M_\lambda,\DZ_p)$ and $x\in \Dist_{\DZ_p}$ acts on $\phi\in M^\ast$ via $x.\phi=\phi\circ \tau(x)$ (cf. \cite[Section II.2.12]{J}, where $dM$ is denote by ${}^\tau M$).
A homomorphism $\phi\colon M\to dM$ is the same as a bilinear form $(\cdot,\cdot)\colon M\times M\to \DZ_p$ that has the property that $(x.v,w)=(v.\tau(x).w)$ for all $v,w\in M$ and $x\in \Dist_{\DZ_p}$. A bilinear form on $M$ with this property is called a {\em contravariant form} on $M$. 

\subsection{Weyl modules}
We denote by $X^+\subset X$ the subset of dominant weights. 
For a dominant weight $\lambda$ we denote by $L_{\DQ_p}(\lambda)$ the simple highest weight module for $\fg_{{\DQ_p}}$ with highest weight $\lambda$. It is finite dimensional.  
 Let $v\in L_{\DQ_p}(\lambda)_\lambda$ be a non-zero element.  Set $\WDelta_{\DZ_p}(\lambda):=\Dist_{\DZ_p}.v\subset L_{\DQ_p}(\lambda)$. This  is called the {\em Weyl module for $\Dist_{\DZ_p}$ with highest weight $\lambda$}. It is a weight module with integral weights, and  free as $\DZ_p$-module of finite rank. So it is an object in $\cD$ (cf. \cite[Part II, Section 8.3]{J}). Note that $\Delta_{\DZ_p}(\lambda)$ is cyclic as a $\Dist^-_{\DZ_p}$-module (this follows from the version of the PBW-theorem for the integral distribution algebra, cf. \cite[Part II, Section 1.12]{J}).
 
\begin{lemma} \label{lemma-Weylisotyp}  Let $M$ be an object in $\cD$. Suppose that $\lambda$ is maximal among the weights of $M$ and that $M$ is as a $\Dist_{\DZ_p}$-module generated by $M_\lambda$. Then $M$ is isomorphic to a direct sum of Weyl modules with highest weight $\lambda$.
\end{lemma}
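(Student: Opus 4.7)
The plan is to build a surjection from a direct sum of Weyl modules onto $M$, and then argue injectivity by a rank count after passing to $\DQ_p$.

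First I would verify two preliminaries: (i) the maximal weight $\lambda$ is dominant, and (ii) $M_\lambda$ is a free $\DZ_p$-module of finite rank $n$. For (i), pick any nonzero $v\in M_\lambda$ and a simple root $\alpha$; by maximality $e_\alpha^{(k)}.v=0$ for all $k\ge 1$, so $v$ generates an $\fsl_2$-highest weight module of highest weight $\langle\lambda,\alpha^\vee\rangle$ inside the finite-dimensional $\fg_{\DQ_p}$-module $M_{\DQ_p}$, forcing $\langle\lambda,\alpha^\vee\rangle\ge 0$. For (ii), since $M$ is $\DZ_p$-free and decomposes as $\bigoplus_\mu M_\mu$, each summand is a $\DZ_p$-direct summand and hence free; fix a $\DZ_p$-basis $v_1,\dots,v_n$ of $M_\lambda$.

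Next I would construct, for each $i$, a $\Dist_{\DZ_p}$-linear map $\phi_i\colon \WDelta_{\DZ_p}(\lambda)\to M$ sending the canonical generator $w\in L_{\DQ_p}(\lambda)_\lambda$ of $\WDelta_{\DZ_p}(\lambda)$ to $v_i$. Tensoring with $\DQ_p$ identifies $\Dist_{\DZ_p}\otimes_{\DZ_p}\DQ_p$ with $U(\fg_{\DQ_p})$, so $M_{\DQ_p}$ is a finite-dimensional $\fg_{\DQ_p}$-module. The cyclic submodule $U(\fg_{\DQ_p}).v_i\subset M_{\DQ_p}$ is a finite-dimensional highest weight module of dominant highest weight $\lambda$, hence is isomorphic to $L_{\DQ_p}(\lambda)$ by standard characteristic zero theory. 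Choosing the isomorphism that sends $w$ to $v_i$ and restricting to the $\Dist_{\DZ_p}$-submodule $\WDelta_{\DZ_p}(\lambda)=\Dist_{\DZ_p}.w$ yields an injective $\Dist_{\DZ_p}$-linear map $\phi_i$ with image $\Dist_{\DZ_p}.v_i\subset M$.

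Combining, set $\phi:=\bigoplus_{i=1}^n\phi_i\colon\WDelta_{\DZ_p}(\lambda)^{\oplus n}\to M$. Surjectivity is immediate because $M$ is generated by $M_\lambda$ as a $\Dist_{\DZ_p}$-module and each $v_i$ lies in the image. For the remaining step I would apply Weyl's complete reducibility to $M_{\DQ_p}$: since $M_{\DQ_p}$ is generated over $U(\fg_{\DQ_p})$ by $(M_{\DQ_p})_\lambda$ and no simple $\fg_{\DQ_p}$-module with highest weight strictly less than $\lambda$ admits a $\lambda$-weight vector, every simple summand of $M_{\DQ_p}$ is isomorphic to $L_{\DQ_p}(\lambda)$, with multiplicity $\dim_{\DQ_p}(M_{\DQ_p})_\lambda=n$. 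Hence $M$ and $\WDelta_{\DZ_p}(\lambda)^{\oplus n}$ are free $\DZ_p$-modules of equal rank, and the surjection $\phi$ between them must be an isomorphism, since its kernel is a torsion-free $\DZ_p$-submodule of rank zero.

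The main obstacle I anticipate is the clean construction of $\phi_i$: no abstract universal property for $\WDelta_{\DZ_p}(\lambda)$ has been established in the excerpt, so I would rely on the concrete realization $\WDelta_{\DZ_p}(\lambda)=\Dist_{\DZ_p}.w\subset L_{\DQ_p}(\lambda)$ together with the classical fact that in characteristic zero every finite-dimensional highest weight module of dominant highest weight is irreducible. Once this bridge between the $\DZ_p$-theory and the $\DQ_p$-theory is in place, the remainder is a rank comparison.
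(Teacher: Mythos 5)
Your proposal is correct and follows essentially the same route as the paper: base change to $\DQ_p$, use complete reducibility to see $M_{\DQ_p}\cong L_{\DQ_p}(\lambda)^{\oplus n}$, identify each $\Dist_{\DZ_p}$-submodule generated by a basis vector of $M_\lambda$ with $\WDelta_{\DZ_p}(\lambda)$, and conclude by a directness/rank argument. The only cosmetic difference is that you package the conclusion as a surjection from $\WDelta_{\DZ_p}(\lambda)^{\oplus n}$ killed by a rank count, whereas the paper exhibits the internal direct sum $M=\bigoplus_i \Dist_{\DZ_p}.m_i$ directly.
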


\begin{proof} We consider $M$ as a $\DZ_p$-lattice in $M_{\DQ_p}$. Note that $M_{\DQ_p}$ is a $\Dist_{\DZ_p}\otimes_{\DZ_p}\DQ_p=U(\fg_{\DQ_p})$-module. Then $M_{\DQ_p}$ is a finite dimensional weight module generated by its $\lambda$-weight space. Hence $M_{\DQ_p}$ is isomorphic to a direct sum of copies of $L_{\DQ_p}(\lambda)$. Now fix  a basis $\{m_1,\dots,m_r\}$ of the free $\DZ_p$-module $M_\lambda$ and set $M_i:=\Dist_{\DZ_p}.m_i$. As $U(\fg_{\DQ_p}).m_i$ must be isomorphic to  $L_{\DQ_p}(\lambda)$, we obtain that  $M_i$ is isomorphic to $\Delta_{\DZ_p}(\lambda)$. Moreover, as  $\{m_1,\dots,m_r\}$ is a basis of the $\DQ_p$-vector space $(M_{\DQ_p})_\lambda$ we deduce  $M_{\DQ_p}=\bigoplus_{i=1}^r U(\fg_{\DQ_p}).m_i$. So the sum of the $M_i$ inside $M$ must be direct. As $M$ is generated by $M_\lambda$, we obtain $M=\bigoplus_{i=1}^rM_i\cong\bigoplus_{i=1}^r\Delta_{\DZ_p}(\lambda)$. 
\end{proof}
\subsection{Weyl filtrations}
We show in this section that our property $(\ast)_{p2}$ is connected to the following representation theoretic notion.

\begin{definition} Let $M$ be a $\Dist_{\DZ_p}$-module.  A {\em Weyl filtration} of $M$ is a finite filtration by $\Dist_{\DZ_p}$-submodules such that the subquotients are isomorphic to Weyl modules. 
\end{definition}
Note that since Weyl modules are free of finite rank as $\DZ_p$-modules, so is each module with a Weyl filtration.
For a closed subset $I$ of $X$ define $M[I]\subset M$ as the $\Dist_{\DZ_p}$-submodule that is generated by $\bigoplus_{\gamma\in I}M_\gamma$.

\begin{proposition}\label{prop-tilt1}  Let $M$ be an object in $\cD$.  Then the following are equivalent.
\begin{enumerate}
\item For each closed subset $I$, the inclusion $M[I]\subset M$ splits as an inclusion of $\DZ_p$-modules. 
\item $M$ admits a Weyl filtration. 
\end{enumerate}
\end{proposition}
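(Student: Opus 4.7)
The plan is to prove both implications by induction on $\DZ_p$-rank or filtration length, reducing (1) to the stronger claim that both $M[I]$ and $M/M[I]$ admit Weyl filtrations whenever $M$ does. Granting this claim, (1) follows immediately: each of $M[I]$ and $M/M[I]$ is then $\DZ_p$-free of finite rank (Weyl modules being $\DZ_p$-free), and the sequence $0\to M[I]\to M\to M/M[I]\to 0$ splits as $\DZ_p$-modules since the quotient is $\DZ_p$-projective.

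For the auxiliary claim I would induct on the length $n$ of a Weyl filtration $0=M_0\subset M_1\subset\cdots\subset M_n=M$ with $M_i/M_{i-1}\cong \Delta_{\DZ_p}(\lambda_i)$. Let $\bar M=M/M_1$, with induced Weyl filtration of length $n-1$, and $\pi\colon M\to\bar M$ the projection. A direct computation using $\bar M_\gamma=\pi(M_\gamma)$ yields $\pi(M[I])=\bar M[I]$. Two cases arise. If $\lambda_1\in I$, then $M_1\subset M[I]$, so $\bar M[I]=M[I]/M_1$ and $\bar M/\bar M[I]=M/M[I]$; the induction hypothesis on $\bar M$ supplies Weyl filtrations of both, and prepending $M_1\cong\Delta_{\DZ_p}(\lambda_1)$ to that of $M[I]/M_1$ produces a Weyl filtration of $M[I]$. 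If $\lambda_1\notin I$, I would first verify $M_1\cap M[I]=0$ by passing to $M_{\DQ_p}$: since $\Dist_{\DQ_p}=U(\fg_{\DQ_p})$ and $M_{\DQ_p}$ is a finite-dimensional $\fg_{\DQ_p}$-module, Weyl's complete-reducibility theorem yields $M_{\DQ_p}\cong\bigoplus L_{\DQ_p}(\lambda_i)$, and $M_{\DQ_p}[I]$ is the sum over those $i$ with $\lambda_i\in I$, which omits $(M_1)_{\DQ_p}=L_{\DQ_p}(\lambda_1)$; as $M$ is $\DZ_p$-free one has $M\hookrightarrow M_{\DQ_p}$, so $M_1\cap M[I]\subset (M_1)_{\DQ_p}\cap M_{\DQ_p}[I]=0$. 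Hence $\pi$ restricts to an isomorphism $M[I]\xrightarrow{\sim}\bar M[I]$, and the short exact sequence $0\to M_1\to M/M[I]\to\bar M/\bar M[I]\to 0$ lets both $M[I]$ and $M/M[I]$ inherit Weyl filtrations from the induction hypothesis.

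For $(1)\Rightarrow(2)$, I would induct on the $\DZ_p$-rank of $M$. Pick a maximal weight $\lambda$ in the support of $M$ and set $I:=\{\mu\in X\mid \mu\geq\lambda\}$, a closed subset. Maximality forces $\bigoplus_{\gamma\in I}M_\gamma=M_\lambda$, so $M[I]=\Dist_{\DZ_p}.M_\lambda$; condition (1) makes it a $\DZ_p$-direct summand of $M$, hence an object of $\cD$ generated by $M_\lambda$ with $\lambda$ still maximal in its support. Lemma \ref{lemma-Weylisotyp} yields $M[I]\cong \Delta_{\DZ_p}(\lambda)^{\oplus r}$, which admits an obvious Weyl filtration. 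Now set $M':=M/M[I]\in\cD$. To apply induction I must show that $M'$ satisfies (1). For any closed $J$, using $M[J\cup I]=M[J]+M[I]$ one computes $M'[J]=M[J\cup I]/M[I]$, whence $M'/M'[J]=M/M[J\cup I]$; applying (1) to the closed set $J\cup I$ shows the latter is $\DZ_p$-free, so $M'[J]$ is a $\DZ_p$-direct summand of $M'$. The induction hypothesis produces a Weyl filtration of $M'$; lifting it to a filtration of $M$ above $M[I]$ and concatenating with the Weyl filtration of $M[I]$ gives a Weyl filtration of $M$.

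The main obstacle I expect is the case $\lambda_1\notin I$ in the $(2)\Rightarrow(1)$ induction: the identity $M_1\cap M[I]=0$ cannot be read off from weight combinatorics alone, since $M[I]$ typically has nonzero weight components strictly below those in $I$. Its proof crucially invokes Weyl's complete reducibility in characteristic zero together with the $\DZ_p$-freeness of $M$ to transfer the identity through the embedding $M\hookrightarrow M_{\DQ_p}$.
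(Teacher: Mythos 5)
Your argument is correct, and in the direction $(2)\Rightarrow(1)$ it takes a genuinely different route from the paper. The paper invokes the vanishing $\Ext^1(\Delta_{\DZ_p}(\lambda),\Delta_{\DZ_p}(\mu))=0$ unless $\mu>\lambda$ to \emph{reorder} a given Weyl filtration so that the subquotients with highest weight in $I$ come first; it then identifies $M[I]$ with the corresponding initial term $M_l$ of the reordered filtration, so that $M/M[I]$ visibly has a Weyl filtration and is therefore $\DZ_p$-free. You instead induct on the length of the filtration and never touch $\Ext$-groups; the price is that you must prove $M_1\cap M[I]=0$ when $\lambda_1\notin I$, which you do by passing to $M_{\DQ_p}$ and using complete reducibility (more precisely, that $M_{\DQ_p}[I]$ is the sum of the isotypic components $N_\mu$ with $\mu\in I$, while $(M_1)_{\DQ_p}$ sits inside $N_{\lambda_1}$ --- phrasing it via isotypic components rather than a chosen decomposition $\bigoplus L_{\DQ_p}(\lambda_i)$ makes this airtight, since $M_1$ need not be a summand of the decomposition you picked). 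What each approach buys: the paper's is shorter but leans on an $\Ext$-vanishing over $\DZ_p$ that it neither proves nor references, whereas yours is self-contained, relying only on characteristic-zero complete reducibility and the $\DZ_p$-freeness of $M$ --- ingredients the paper already uses elsewhere. In the direction $(1)\Rightarrow(2)$ your argument is essentially the paper's: both peel off a maximal weight via Lemma \ref{lemma-Weylisotyp}; the paper builds the entire chain of closed sets $I_0\subset\cdots\subset I_r$ at once and refines, while you induct on the rank, which forces the small extra verification (which you supply, via $M[J\cup I]=M[J]+M[I]$) that the quotient $M/M[I]$ again satisfies condition (1).
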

\begin{proof} We assume (1). Let $\mu_1$, \dots, $\mu_r$ be an enumeration of all weights of $M$  with the property that $\mu_i<\mu_j$ implies $j<i$. For $j=0,\dots,r$ set $I_j=\bigcup_{i\le j}\{\ge\mu_i\}$. Then  $\emptyset=I_0\subset I_1\subset\dots\subset I_r$ and all $I_j$ are closed.
From (1)  it follows that each inclusion $M[{I_{j-1}}]\subset M[{I_{j}}]$ splits and hence the quotient  $M[{I_{j}}]/M[I_{j-1}]$ is free over $\DZ_p$. By construction, it is generated by its $\mu_j$-weight space, and Lemma \ref{lemma-Weylisotyp} implies that it is isomorphic to a direct sum of copies of $\Delta_{\DZ_p}(\mu_j)$. So we can refine the filtration $0=M[{I_0}]\subset M[{I_1}]\subset\dots \subset M[{I_r}]=M$ and obtain a Weyl filtration. 

Now we assume (2). Let $I\subset X$ be closed. As $\Ext^1(\Delta_{\DZ_p}(\lambda),\Delta_{\DZ_p}(\mu))=0$ unless $\mu>\lambda$, we can reorder a Weyl filtration in such a way that we find a filtration $0=M_0\subset \dots\subset M_r=M$ with $M_i/M_{i-1}\cong\Delta_{\DZ_p}(\lambda_i)$ such that $\lambda_1,\dots,\lambda_l\in I$ and $\lambda_{l+1},\dots,\lambda_r\not\in I$ for some $l\in\{0,...,r\}$. Hence $M[I]=M_l$ and $M/M[I]=M/M_l$ admit Weyl filtrations as well. In particular, $M/M[I]$ is free as a $\DZ_p$-module, and hence the inclusion $M[I]\subset M$ splits. 
\end{proof}

\subsection{Tilting modules}
Let $T$ be an object in $\cD$.

\begin{definition} $T$ is called a tilting module if it admits a Weyl filtration and so does its contravariant dual $dT$. 
\end{definition}

For any dominant weight $\lambda$ there is an up to isomorphism unique tilting module $T_{\DZ_p}(\lambda)$ which has the following properties: it is indecomposable, its $\lambda$-weight space $T_{\DZ_p}(\lambda)_\lambda$ is free of rank $1$, and $T_{\DZ_p}(\lambda)_\mu\ne 0$ implies $\mu\le \lambda$ (cf. \cite[Section E.19]{J}).

\begin{remark} Let $k$ be an algebraically closed field of characteristic $p$. For any $\lambda\in X^+$ there exists an analogous Weyl module $\WDelta_k(\lambda)$ and a  tilting module $T_k(\lambda)$ for the connected semisimple, simply connected algebraic group $G_k$ with root system $R$. The number of occurrences of $\WDelta_k(\mu)$ in a given Weyl filtration of $T_k(\lambda)$ is independent of the filtration and  is of central importance in representation theory.   By \cite[Section E.23]{J}, one can obtain $T_k(\lambda)$ from $T_{\DZ_p}(\lambda)$ by extension of scalars. The same is true for Weyl modules, and the Weyl multiplicities in $T_k(\lambda)$ and $T_{\DZ_p}(\lambda)$ coincide.
\end{remark}

\begin{lemma}\label{lemma-symformtilt} Let $\lambda\in X^+$. 
\begin{enumerate}
\item  We have $dT_{\DZ_p}(\lambda)\cong T_{\DZ_p}(\lambda)$.
\item If $p\ne 2$, then there exists a non-degenerate and  symmetric contravariant form $(\cdot,\cdot)$ on $T_{\DZ_p}(\lambda)$.
\end{enumerate}
\end{lemma}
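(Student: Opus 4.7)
For (1), I apply the uniqueness of indecomposable tilting modules (\cite[Section E.19]{J}). Contravariant duality $d$ preserves the class of tilting modules, since $T$ tilting means both $T$ and $dT$ admit Weyl filtrations, and then the same holds for $dT$ (using $d(dT)\cong T$). The canonical natural isomorphism $d\circ d\cong\id_\cD$ (valid because every object of $\cD$ is free of finite rank over $\DZ_p$) forces $d$ to preserve indecomposability. Moreover $(dT_{\DZ_p}(\lambda))_\mu=(T_{\DZ_p}(\lambda)_\mu)^*$, so $dT_{\DZ_p}(\lambda)$ has the same character as $T_{\DZ_p}(\lambda)$; in particular its $\lambda$-weight space is free of rank one, and all other weights are strictly smaller. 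Uniqueness then gives $dT_{\DZ_p}(\lambda)\cong T_{\DZ_p}(\lambda)$.

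For (2), fix an isomorphism $\phi\colon T\to dT$ provided by (1); it corresponds to a non-degenerate contravariant form $b$ on $T:=T_{\DZ_p}(\lambda)$. The transposed form $b^\top(m,n):=b(n,m)$ is again a non-degenerate contravariant form (using that $\tau$ is an involution), and hence corresponds to a second isomorphism $\phi^\top\colon T\to dT$. Set $u:=\phi^{-1}\circ\phi^\top\in\End_\cD(T)$, which is an automorphism. Since $T_\lambda=\DZ_p\cdot v$ is free of rank one, both $\phi|_{T_\lambda}$ and $\phi^\top|_{T_\lambda}$ are multiplication by the same scalar $b(v,v)=b^\top(v,v)\in\DZ_p^\times$ (a unit by non-degeneracy, together with the orthogonality of contravariant forms on distinct weight spaces). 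Consequently $u|_{T_\lambda}=\id_{T_\lambda}$.

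Next, set $\phi_+:=\tfrac12(\phi+\phi^\top)=\phi\circ\tfrac12(1+u)$; the plan is to show $\phi_+$ is an isomorphism, so that the corresponding form $\tfrac12(b+b^\top)$ is the desired symmetric, contravariant, non-degenerate form. Since $T$ is indecomposable over the complete local Noetherian ring $\DZ_p$, a standard Hensel lifting argument shows that $A:=\End_\cD(T)$ is a (possibly non-commutative) local $\DZ_p$-algebra. For $p\ne 2$ the scalar $2$ is a unit in $A$, so from $(1+u)+(1-u)=2$ at least one of $1\pm u$ must lie outside the maximal ideal of $A$. But $(1-u)|_{T_\lambda}=0$, so $1-u$ is not invertible; therefore $1+u$ is, and $\phi_+$ is an isomorphism.

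The principal obstacle is this final symmetrization: one needs simultaneously the locality of $\End_\cD(T)$ (so that $1\pm u$ sort into a unit and a non-unit, detected by restriction to $T_\lambda$) and the invertibility of $2$ in that ring. The former rests on indecomposability of $T$ together with completeness of $\DZ_p$; the latter forces $p\ne 2$. Both hypotheses of the lemma are used precisely at this point.
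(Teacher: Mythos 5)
Your argument is correct, and for part (1) it is essentially the paper's: both use that $d$ is an additive duality with $d^2\cong\id_\cD$ preserving characters and indecomposability, then invoke uniqueness of the indecomposable tilting module with a given highest weight. For part (2) you symmetrize exactly as the paper does (the factor $\tfrac12$ is immaterial), but you verify non-degeneracy of the symmetrized form by a different mechanism. The paper simply checks that $b+b^\top$ restricts to a non-degenerate form on the rank-one space $T_{\DZ_p}(\lambda)_\lambda$ (using $2\in\DZ_p^\times$) and then cites \cite[Section E.20]{J}, which says that a contravariant form on a tilting module is non-degenerate as soon as its restriction to the highest weight space is. You instead make this self-contained: writing $\phi^\top=\phi\circ u$ with $u|_{T_\lambda}=\id$, you use that $\End_\cD(T_{\DZ_p}(\lambda))$ is local (indecomposability plus completeness of $\DZ_p$) and that $2=(1+u)+(1-u)$ is a unit to conclude $1+u$ is invertible. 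Both routes are sound and use $p\ne2$ and the rank-one highest weight space at the same point; yours trades an external citation for a Fitting/Krull--Schmidt-type argument, at the cost of having to justify locality of the endomorphism ring, while the paper's is shorter but leans on Jantzen's result, whose proof is of a similar flavour.
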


\begin{proof} (1) As the contravariant duality is additive and satisfies $d^2\cong\id_{\cD}$, it follows  that $dT_{\DZ_p}(\lambda)\cong T_{\DZ_p}(\mu)$ for some $\mu\in X^+$. But the dimensions of the weight spaces  of $M$ and $dM$ coincide for all $M$ in $\cD$. A comparison of highest weights hence yields $\mu=\lambda$. 

(2) By (1), there exists a non-degenerate contravariant form $(\cdot,\cdot)^\prime$ on $T_{\DZ_p}(\lambda)$. Denote by $(\cdot,\cdot)$ the sum of $(\cdot,\cdot)^\prime$ and its transpose. This is now a symmetric contravariant form. As $T_{\DZ_p}(\lambda)_\lambda$ is free of rank $1$, since the restriction of $(\cdot,\cdot)^\prime$ to $T_{\DZ_p}(\lambda)_\lambda$ is non-degenerate, and since $p\ne 2$, the restriction of $(\cdot,\cdot)$ to $T_{\DZ_p}(\lambda)_\lambda$ is non-degenerate as well. By \cite[Section E.20]{J}, $(\cdot,\cdot)$ is hence non-degenerate.
\end{proof}

\begin{proposition} \label{prop-ontilt} Let $M$ be an object in $\cD$. The following are equivalent.
\begin{enumerate}
\item  For any closed subset $I$, the inclusion $M[I]\subset M$ splits as an inclusion of $\DZ_p$-modules, and $M$ is isomorphic to its contravariant dual. 
\item $M$ is a tilting module. 
\end{enumerate}
\end{proposition}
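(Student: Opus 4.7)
The plan is to reduce both directions to Proposition \ref{prop-tilt1} and Lemma \ref{lemma-symformtilt}, using the classification of indecomposable tilting modules together with a Krull--Schmidt-type decomposition over the complete local ring $\DZ_p$.

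For the implication (1) $\Rightarrow$ (2): By Proposition \ref{prop-tilt1}, the assumption that $M[I]\subset M$ splits as a $\DZ_p$-inclusion for every closed $I$ is equivalent to $M$ admitting a Weyl filtration. Since contravariant duality is an additive, length-preserving involution on $\cD$, the assumption $M\cong dM$ immediately implies that $dM$ also admits a Weyl filtration. By the definition of a tilting module, $M$ is tilting.

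For the implication (2) $\Rightarrow$ (1): If $M$ is tilting, then $M$ admits a Weyl filtration, so Proposition \ref{prop-tilt1} yields the splitting property for all $M[I]\subset M$. It remains to show $M\cong dM$. First I would invoke Krull--Schmidt for the category $\cD$: since $\DZ_p$ is a complete local ring, any object of $\cD$ (which is free of finite rank over $\DZ_p$) decomposes uniquely, up to order and isomorphism, into indecomposables. Combined with the classification quoted from \cite[Section E.19]{J}, which states that each indecomposable tilting module is isomorphic to some $T_{\DZ_p}(\lambda)$ with $\lambda\in X^+$, we obtain a decomposition $M\cong\bigoplus_{i=1}^n T_{\DZ_p}(\lambda_i)$. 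Now applying $d$, using its additivity and the self-duality $dT_{\DZ_p}(\lambda)\cong T_{\DZ_p}(\lambda)$ from Lemma \ref{lemma-symformtilt}(1), gives
\[
dM\cong\bigoplus_{i=1}^n dT_{\DZ_p}(\lambda_i)\cong\bigoplus_{i=1}^n T_{\DZ_p}(\lambda_i)\cong M,
\]
which is what was needed.

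The main (and essentially only) subtle point is justifying that a tilting module in $\cD$ decomposes into indecomposable tilting modules of the form $T_{\DZ_p}(\lambda)$. This follows from Krull--Schmidt over $\DZ_p$ (all endomorphism rings of objects in $\cD$ are finitely generated $\DZ_p$-algebras, hence semiperfect, since $\DZ_p$ is complete local) together with the fact that any indecomposable summand of a tilting module is again tilting, and so is of the form $T_{\DZ_p}(\lambda)$ by the characterization recalled before Lemma \ref{lemma-symformtilt}. Once this decomposition is in hand, the rest of the argument is entirely formal.
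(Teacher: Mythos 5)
Your proof is correct and follows essentially the same route as the paper: both directions reduce to Proposition \ref{prop-tilt1} together with the self-duality of tilting modules under contravariant duality. The only difference is that the paper simply asserts that every tilting module is self-dual, whereas you justify this via Krull--Schmidt over the complete local ring $\DZ_p$, the classification of indecomposable tilting modules, and Lemma \ref{lemma-symformtilt}(1) --- a legitimate and standard way to fill in that step.
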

\begin{proof} Assume that property (1) is satisfied.  Proposition \ref{prop-tilt1} implies that $M$ admits a Weyl filtration. As it is self-dual, also its dual admits a Weyl filtration.  

Now assume property (2). As $M$ admits a Weyl filtration, Proposition \ref{prop-tilt1} implies that the inclusion $M[I]\subset M$ splits for any closed subset $I$. As each tilting module is self-dual with respect to the contravariant duality, property (1) is satisfied. 
\end{proof}

\subsection{Tilting modules as graded spaces with operators}
In this section we assume $p\ne2$. The only reason for this assumption is that we need the existence of a {\em symmetric} contravariant form on tilting modules, see Lemma \ref{lemma-symformtilt}. Let $T=\bigoplus_{\mu\in X}T_\mu$ be a tilting module in $\cD$. For any $\alpha\in\Pi$ denote by $F_\alpha\colon T\to T$ the action map of the element $f_\alpha\in\Dist_{\DZ_p}$. Fix a symmetric non-degenerate contravariant form $(\cdot,\cdot)$ on $T$ (cf. Lemma \ref{lemma-symformtilt}).

\begin{proposition} The data $T=\bigoplus_{\mu\in X}T_\mu$ and $\{F_\alpha\colon T\to T\}_{\alpha\in\Pi}$ satisfy $(\ast)_{p1}$ and $(\ast)_{p2}$, and $(\cdot,\cdot)$ is a $p$-adic HR-form on $T$.
\end{proposition}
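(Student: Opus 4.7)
The plan is to verify $(\ast)_{p1}$, $(\ast)_{p2}$ and each of the four HR-axioms in turn, using throughout that $F_\alpha$ is the action of $f_\alpha\in\Dist_{\DZ_p}$. Property $(\ast)_{p1}$ is essentially for free: $T$ is free of finite rank by definition of $\cD$, and since $f_\alpha^{(n)}=f_\alpha^n/n!$ lies in $\Dist_{\DZ_p}$ and acts on $T$, the endomorphism $F_\alpha^n=n!\,f_\alpha^{(n)}$ sends $T$ into $n!\,T$. To establish $(\ast)_{p2}$, I would first identify the submodule $T_I$ (defined via the divided powers $F_\alpha^{(n)}$) with $T[I]$ (the $\Dist_{\DZ_p}$-submodule generated by $\bigoplus_{\mu\in I}T_\mu$). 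By Proposition \ref{prop-Distgens}, $\Dist^-_{\DZ_p}$ is generated by the $f_\alpha^{(n)}$, so $T_I$ equals the $\Dist^-_{\DZ_p}$-submodule generated by $\bigoplus_{\mu\in I}T_\mu$. Since $I$ is upward closed in $X$, any $e_\alpha^{(n)}$ applied to a weight-$\mu$ vector with $\mu\in I$ yields a vector of weight $\mu+n\alpha\in I$, already lying in $\bigoplus_{\nu\in I}T_\nu$, so enlarging to the full $\Dist_{\DZ_p}$-action adds nothing. Hence $T_I=T[I]$, and Proposition \ref{prop-ontilt} applied to the tilting module $T$ yields $(\ast)_{p2}$.

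For the HR-axioms, properties (1), (3) and (4) of Definition \ref{def-formp} are immediate from the general theory of contravariant forms. Symmetry and non-degeneracy are given by hypothesis. Orthogonality of distinct weight spaces follows from $\tau(h_\alpha)=h_\alpha$: for $v\in T_\mu$, $w\in T_\nu$ we get $\lgl\mu-\nu,\alpha^\vee\rgl(v,w)=0$, and since $\DZ_p$ is torsion-free and $\mu\neq\nu$ forces some $\lgl\mu-\nu,\alpha^\vee\rgl\neq 0$, we conclude $(v,w)=0$. The commutation relations are inherited from $\Dist_{\DZ_p}$: contravariance together with $\tau(f_\alpha)=e_\alpha$ forces $E_\alpha$ to equal the action of $e_\alpha$, and then $[E_\alpha,F_\beta]$ is the action of $[e_\alpha,f_\beta]$, which is $h_\alpha$ for $\alpha=\beta$ (acting on $T_\mu$ by $\lgl\mu,\alpha^\vee\rgl$) and vanishes otherwise.

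The remaining and main task is property (2) of Definition \ref{def-formp}: faithfulness of the restriction of $(\cdot,\cdot)$ to $T_I\times T_I$. Here I would pass to the generic fiber and invoke Theorem 1. The extended form $(\cdot,\cdot)_{\DQ_p}$ is a non-degenerate, symmetric, contravariant form on $T_{\DQ_p}$, and $T_{\DQ_p}$ is a finite dimensional rational representation of $G_{\DQ_p}$, hence a semisimple $\fg_{\DQ_p}$-module. One checks that $(\cdot,\cdot)_{\DQ_p}$ is orthogonal with respect to the isotypical decomposition (distinct simples $L_{\DQ_p}(\lambda),L_{\DQ_p}(\mu)$ pair to zero, as any pairing would give by Schur a non-zero morphism $L_{\DQ_p}(\lambda)\to dL_{\DQ_p}(\mu)\cong L_{\DQ_p}(\mu)$). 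Within each isotypical piece one diagonalizes the form on the highest weight space to obtain an orthogonal decomposition into simples; on each simple summand the restricted form is an HR-form by the $F$-cyclic case (Lemma \ref{lemma-HRform}), so the orthogonal sum is an HR-form on $T_{\DQ_p}$, exactly as in the $(2)\Rightarrow(1)$ direction of Theorem \ref{thm-mainK}. In particular the restriction of $(\cdot,\cdot)_{\DQ_p}$ to $(T_{\DQ_p})_I\times(T_{\DQ_p})_I$ is non-degenerate. Finally, one verifies $T_I\otimes_{\DZ_p}\DQ_p=(T_{\DQ_p})_I$ (both are the smallest $F^{(\ast)}$-stable $\DQ_p$-subspace containing $T_\mu\otimes\DQ_p$ for $\mu\in I$, since the divided powers are defined integrally), so any $v\in T_I$ with $(v,T_I)=0$ pairs trivially with $(T_{\DQ_p})_I$, forcing $v=0$ in $T_{\DQ_p}$ and hence in the torsion-free $T$.

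The main obstacle is this last step: reconciling the integral faithfulness condition with the rational non-degeneracy obtained from Theorem 1. Once the identifications $T_I=T[I]$ and $T_I\otimes_{\DZ_p}\DQ_p=(T_{\DQ_p})_I$ are in place, everything else is essentially formal.
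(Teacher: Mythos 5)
Your proposal is correct and follows essentially the same route as the paper: $(\ast)_{p1}$ is immediate from $F_\alpha^n=n!\,f_\alpha^{(n)}$, the identification $T_I=T[I]$ combined with Proposition \ref{prop-ontilt} gives $(\ast)_{p2}$, axioms (1), (3), (4) are read off from contravariance, and faithfulness on $T_I$ is obtained by passing to $T_{\DQ_p}$ and using that the isotypic decomposition of the semisimple module $T_{\DQ_p}$ is orthogonal for any contravariant form. The only cosmetic difference is that you justify $T_I=T[I]$ via the triangular decomposition of $\Dist_{\DZ_p}$ and upward-closedness of $I$, whereas the paper appeals to the $\Dist^-_{\DZ_p}$-cyclicity of the Weyl subquotients; both are fine.
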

\begin{proof} Clearly, $T$ is free over $\DZ_p$ of finite rank. 
For any $\alpha\in\Pi$ and $n\ge 0$ we have $F_{\alpha}^n=n!F_{\alpha}^{(n)}$ which implies $F_\alpha^n(T)\subset n!T$. Hence $(\ast)_{p1}$ is satisfied. As each Weyl module is cyclic as a $\Dist^-_{\DZ_p}$-module, generated by its highest weight, we can use Proposition \ref{prop-Distgens} to deduce that $T[I]=T_I$ for any closed subset $I$ of $X$. Then  Proposition \ref{prop-ontilt} implies that $T_I\subset T$ splits as an inclusion of  $\DZ_p$-modules. 
 Hence $(\ast)_{p2}$ is satisfied as well. 

It remains to show that the contravariant form $(\cdot,\cdot)$ is a $p$-adic  HR-form. It is symmetric and non-degenerate, and the weight spaces are orthogonal. The commutation relations between the $F_\alpha$ and their adjoints are implied by the commutation relations between the $f_\alpha$'s and the $e_\beta$'s. Now let $I\subset X$ be a closed subset. Note that $M_{\DQ_p}$ is a semisimple representation of $U(\fg_Q)$, and $(M_I)_{\DQ_p}$ is the subrepresentation that contains all isotypic components with highest weights in  $I$. The non-degenerate contravariant form on $M$ induces a  non-degenerate form on $M_{\DQ_p}$. As the decomposition into isotypic components is an orthogonal decomposition with respect to an arbitrary contravariant form, we deduce that the restriction to $(M_I)_{\DQ_p}$ is non-degenerate. This implies that the restriction to $M_I\subset(M_{\DQ_p})_I$ is faithful. Hence conditions (1)--(4) of Definition \ref{def-formp} are satisfied.
\end{proof}

In the next section we prove the converse of the above, i.e. the existence of an HR-form on a graded space $M$ with simple root operators ensures that there is the structure of a tilting module on $M$.

\subsection{Tilting modules from graded spaces with operators}

In the following we do not have to assume that $p$ is odd.

\begin{theorem} Suppose that $M=\bigoplus_{\mu\in X}M_\mu$ and $\{F_\alpha\colon M\to M\}_{\alpha\in\Pi}$ satisfy the conditions $(\ast)_{p1}$ and $(\ast)_{p2}$. Suppose that there exists a $p$-adic HR-form on $M$. Then there exists a structure of  a $\Dist_{\DZ_p}$-module on $M$ such that $F_\alpha$ is the action map of $f_\alpha$ for all $\alpha\in\Pi$. Moreover, with this structure,  $M$ is an object in $\cD$ and a tilting module. \end{theorem}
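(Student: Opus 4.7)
The strategy is to reduce to the characteristic zero result (Theorem \ref{thm-mainK}) by base change to $\DQ_p$, and then transfer the resulting $\fg_{\DQ_p}$-structure to a $\Dist_{\DZ_p}$-structure on the lattice $M \subset M_{\DQ_p}$. First I would consider $M_{\DQ_p} := M \otimes_{\DZ_p} \DQ_p$ with its induced $X$-grading and operators $F_\alpha$. By Lemma \ref{lemma-indform}, the $\DQ_p$-bilinear extension $(\cdot,\cdot)_{\DQ_p}$ is an HR-form on $M_{\DQ_p}$ in the sense of Definition \ref{def-formK}. Applying Theorem \ref{thm-mainK} (and inspecting its proof via the universal property of $\LDelta(\lambda)$) we obtain a semisimple $X$-graded $\fg_{\DQ_p}$-module structure on $M_{\DQ_p}$ in which $f_\alpha$ acts as $F_\alpha$, $h_\alpha$ acts on $(M_{\DQ_p})_\mu$ by $\lgl\mu,\alpha^\vee\rgl$, and $e_\alpha$ acts as the adjoint $E_\alpha$ of $F_\alpha$ with respect to $(\cdot,\cdot)_{\DQ_p}$.

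Next I would promote this to a $\Dist_{\DZ_p}$-structure and check that $M$ itself is a $\Dist_{\DZ_p}$-submodule of $M_{\DQ_p}$. By Proposition \ref{prop-Distgens}, $\Dist_{\DZ_p}$ is generated as a $\DZ_p$-algebra by the divided powers $e_\alpha^{(n)}$ and $f_\alpha^{(n)}$, so it suffices to check that each preserves $M$. Stability under $F_\alpha^{(n)}$ is precisely $(\ast)_{p1}$, and stability under $E_\alpha^{(n)}$ is Lemma \ref{lemma-stabE}. Since $M$ is free of finite rank over $\DZ_p$ by $(\ast)_{p1}$ and since $h_\alpha$ acts on $M_\mu$ by the integer $\lgl\mu,\alpha^\vee\rgl$, the resulting $\Dist_{\DZ_p}$-module $M$ is an object of $\cD$ with weight decomposition $M=\bigoplus_{\mu\in X} M_\mu$.

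It remains to show $M$ is a tilting module, for which I would invoke Proposition \ref{prop-ontilt}. For the first condition, I claim $M[I] = M_I$ for every closed $I \subset X$. The inclusion $M_I \subset M[I]$ is clear since $M[I]$ is stable under $f_\alpha^{(n)} \in \Dist_{\DZ_p}$. Conversely, $M_I$ is a $\Dist_{\DZ_p}$-submodule containing $\bigoplus_{\mu\in I}M_\mu$: it is $F^{(\ast)}$-stable by construction, $E^{(\ast)}$-stable by Lemma \ref{lemma-substab}(2), and closed under the diagonal action of $\binom{\alpha^\vee}{n}$ since each $M_\mu \subset M_I$ whenever $\mu \in I$ and $M_I$ is $X$-graded; together with Proposition \ref{prop-Distgens} this forces $M[I] \subset M_I$. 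Hence $M[I] = M_I$ is a direct summand of $M$ as a $\DZ_p$-module by $(\ast)_{p2}$. For the second condition, the HR-form is contravariant: from $(F_\alpha v, w) = (v, E_\alpha w)$ together with the identifications $F_\alpha = f_\alpha$ and $E_\alpha = e_\alpha = \tau(f_\alpha)$, one deduces $(x.v, w) = (v, \tau(x).w)$ for all $x$ in a generating set of $\Dist_{\DZ_p}$, hence for all $x$. As $(\cdot,\cdot)$ is non-degenerate on $M$, it gives an isomorphism $M \xrightarrow{\sim} dM$. Both hypotheses of Proposition \ref{prop-ontilt} are therefore met and $M$ is a tilting module.

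The technically delicate step is the identification $M[I] = M_I$ in the third paragraph, which is what allows the $p$-adic condition $(\ast)_{p2}$ to feed into the representation-theoretic splitting condition of Proposition \ref{prop-ontilt}; this hinges on having simultaneous $F^{(\ast)}$- and $E^{(\ast)}$-stability of $M_I$, which in turn rests crucially on the integrality of the adjoint divided powers provided by Lemma \ref{lemma-stabE}. Everything else is, in essence, a careful transfer of structure from the characteristic zero HR-picture to the integral divided-power setting.
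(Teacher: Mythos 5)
Your proposal is correct and follows essentially the same route as the paper's own proof: base change to $\DQ_p$, apply Lemma \ref{lemma-indform} and Theorem \ref{thm-mainK} to get the $\fg_{\DQ_p}$-structure, use $(\ast)_{p1}$, Lemma \ref{lemma-stabE} and Proposition \ref{prop-Distgens} to see that the lattice $M$ is a $\Dist_{\DZ_p}$-submodule, and then identify $M[I]=M_I$ via Lemma \ref{lemma-substab} so that $(\ast)_{p2}$ and the contravariance of the form feed into Proposition \ref{prop-ontilt}. Your write-up merely spells out a few steps (both inclusions in $M[I]=M_I$, the contravariance check on generators) that the paper leaves implicit.
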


\begin{proof}
We fix a $p$-adic HR-form $(\cdot,\cdot)$ on $M$.
We  denote by $(\cdot,\cdot)_{{\DQ_p}}\colon M_{\DQ_p}\times M_{\DQ_p}\to {\DQ_p}$ the bilinear from induced by the HR-form $(\cdot,\cdot)$. By Lemma \ref{lemma-indform}, this is an HR-form in the sense of Definition \ref{def-formK} on $M_{\DQ_p}$. Theorem \ref{thm-mainK}  hence yields a structure of an $X$-graded $\fg_{\DQ_p}$-representation on $M_{\DQ_p}$ such that the action maps of $f_\alpha$ and $e_\alpha$ are  $F_\alpha$ and $E_\alpha$.   Now the $\DZ_p$-lattice $M\subset M_{\DQ_p}$ is stable under the operators $f_{\alpha}^{(n)}$ and $e_{\alpha}^{(n)}$, so  Proposition \ref{prop-Distgens} implies that it is  a $\Dist_{\DZ_p}$-submodule. 
 
  It remains to prove that $M$, endowed with this structure, is a tilting module.  First note that the HR-form becomes a non-degenerate contravariant form on $M$. Hence $M\cong dM$. According to Proposition \ref{prop-ontilt} it hence suffices to show that for any closed subset $I$ of $X$ the inclusion $M[I]\subset M$ splits as an inclusion of $\DZ_p$-modules. So let $I$ be a closed subset of $X$. Then $M_I\subset M$ splits as an inclusion of $\DZ_p$-modules, by Assumption $(\ast)_{p2}$. But, by Lemma \ref{lemma-substab}, $M_I$ is $F^{(\ast)}$- and $E^{(\ast)}$-stable. So it is a $\Dist_{\DZ_p}$-submodule and hence must coincide with $M[I]$.\end{proof}
 
\end{document}